\documentclass[10pt]{article}

\usepackage{indentfirst}
\usepackage[a4paper,margin=1in]{geometry}
\usepackage{fontspec}
\usepackage{microtype}
\usepackage{amsmath,amssymb,amsthm,mathtools}
\usepackage{booktabs,array}
\usepackage{hyperref}
\hypersetup{colorlinks=true,linkcolor=blue,citecolor=blue,urlcolor=blue}
\usepackage{enumitem}
\setlist{nosep}
\allowdisplaybreaks

\newtheorem{theorem}{Theorem}[section]
\newtheorem{lemma}{Lemma}[section]
\newtheorem{proposition}{Proposition}[section]
\newtheorem{corollary}{Corollary}[section]
\newtheorem{conjecture}{Conjecture}[section]
\theoremstyle{remark}
\newtheorem{remark}{Remark}[section]

\newcommand{\TT}{\mathrm{TT}}
\newcommand{\Reg}{\mathrm{RT}_5}
\newcommand{\ex}{\operatorname{ex}}
\newcommand{\ind}{\mathbf{1}}
\newcommand{\dd}{\overleftrightarrow{K}}
\newcommand{\dT}{\overleftrightarrow{T}}

\title{Degree-square Tur\'an problem for two self-converse tournaments}
\author{Zhuoran HAN, Yaojun CHEN\footnote{Corresponding author. Email: yaojunc@nju.edu.cn}
\\
{\small School of Mathematics, Nanjing University, Nanjing 210093, China}
 }

\date{}

\begin{document}
\maketitle

\begin{abstract}

For a fixed digraph $F$, let $\operatorname{ex}_2^+(n,F)$ be the maximum of $\sum_{v\in V(D)}d_D^+(v)^2$ over all $n$-vertex $F$-free digraphs. Ai et al. [arXiv:2606.03520, 2026] asked for which self-converse tournament $F$ one can determine $\operatorname{ex}_2^+(n,F)$. Let $\TT_r$ denote a transitive tournament on $r$ vertices and $\text{RT}_5$ denote a regular tournament on 5 vertices. Note that both $\TT_r$ and $\text{RT}_5$ are self-converse. I\v{l}kovi\v{c} [arXiv:2609.05042, 2026] determined $\operatorname{ex}_2^+(n,F)$ for $F=\TT_4$. In this paper, we determine $\operatorname{ex}_2^+(n,F)$ for $F$ being $\TT_r$ or $\text{RT}_5$, and the latter confirms a conjecture due to
I\v{l}kovi\v{c}.

\vskip 2mm
\noindent\textbf{Keywords.} Degree-square Tur\'an problem; Transitive tournament; Regular tournament; Extremal digraph.\\
\noindent\textbf{2020 Mathematics Subject Classification.} 05C20, 05C35, 05C65.

\end{abstract}

\medskip

\section{Introduction}
For an undirected graph $H$, the classical Tur\'an number $\ex(n,H)$ is the maximum
number of edges in an $n$-vertex $H$-free graph. Turán theory dates back to the early 1940’s when Turán \cite{Turan} determined $\ex(n, K_r)$ and the corresponding extremal graph for the complete graph $K_r$.

\begin{theorem}[Tur\'an \cite{Turan}]
For every $r\ge2$,  \[\ex(n,K_r)=e\!\left(T_{r-1}(n)\right),\] where $T_{r-1}(n)$ is the balanced complete $(r-1)$-partite graph on $n$ vertices. Moreover, $T_{r-1}(n)$ is the unique extremal $K_r$-free graph up to isomorphism.
\end{theorem}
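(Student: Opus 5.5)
We prove Turán's theorem by induction on $n$, combined with a direct uniqueness analysis, namely Zykov symmetrization. Both the bound and uniqueness then follow from showing that an extremal $K_r$-free graph must be complete multipartite.

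First we show that any $K_r$-free graph $G$ on $n$ vertices with the maximum number of edges is complete $(r-1)$-partite, possibly with empty parts. The key claim is that non-adjacency is an equivalence relation on $V(G)$. Reflexivity and symmetry are trivial, so we need transitivity. Suppose $u\not\sim v$ and $v\not\sim w$ but $u\sim w$.

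\begin{itemize}
\item If $d(v)<d(u)$, replace $v$ by a clone of $u$, i.e.\ a new vertex with the same neighbourhood as $u$ and not adjacent to $u$. The resulting graph is still $K_r$-free, since any clique uses at most one of $u$ and its clone. Its edge count changes by $d(u)-d(v)>0$, which contradicts maximality.
\item If $d(v)<d(w)$, the same argument applies with $w$ in place of $u$.
\item Otherwise $d(v)\ge d(u)$ and $d(v)\ge d(w)$. Replace both $u$ and $w$ by clones of $v$. Because $u\sim w$, the edge $uw$ is counted twice in $d(u)+d(w)$, so the edge count changes by $2d(v)-(d(u)+d(w)-1)\ge 1>0$. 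This again contradicts maximality, and the new graph is still $K_r$-free.
\end{itemize}

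Hence the non-adjacency classes are independent sets, and any two vertices in different classes are adjacent. So $G$ is complete multipartite. Choosing one vertex from each class gives a clique, so there are at most $r-1$ classes.

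Next we optimise over complete $(r-1)$-partite graphs with part sizes $n_1,\dots,n_{r-1}$ summing to $n$. If $n_i\ge n_j+2$, move one vertex from part $i$ to part $j$. The number of edges changes by $(n_i-1)-n_j>0$. Therefore in a maximum the sizes differ by at most one, and the graph is $T_{r-1}(n)$. This gives $\ex(n,K_r)=e(T_{r-1}(n))$.

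Uniqueness follows from the same two steps. Every extremal graph is complete multipartite with at most $r-1$ parts by the symmetrization argument. The balancing step then shows it must be balanced, since any unbalanced choice is strictly worse. The balanced complete $(r-1)$-partite graph is determined up to isomorphism, so $T_{r-1}(n)$ is the unique extremal graph.

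The main obstacle is the strict-increase bookkeeping in the third case of the transitivity argument. One must remember that $uw$ is an edge, so that the deletion of $u$ and $w$ removes only $d(u)+d(w)-1$ edges, and then check that the count strictly increases. One must also check that the cloning operations never create a $K_r$.
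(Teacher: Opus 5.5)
The paper does not prove this statement. Turán's theorem is quoted from \cite{Turan} as background and is later used only as a black box: via Theorem~\ref{thm:Brown-Harary} it gives $e(T_i)\le\frac23t_i^2$ in Lemma~\ref{lem:P-H} and $2\ex(m,K_5)=e_4(m)$ in Lemma~\ref{lem:P-Y}.

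Your Zykov-symmetrization argument is a correct and complete proof of both the bound and the uniqueness. The clonings preserve $K_r$-freeness, the edge-count bookkeeping in all three cases is right (including the $-1$ for the edge $uw$), and the balancing move changes the edge count by $(n_i-1)-n_j>0$. The only blemish is the opening phrase ``by induction on $n$'': nothing in your argument is inductive, so that phrase should be dropped.

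Two arguments in the paper play the same role in the directed setting, and both are worth comparing with yours.

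\begin{itemize}
\item Lemma~\ref{lem:reg5-symmetrization} is the digraph analogue of your first step. There the functional is quadratic, so averaging the two cloning directions gives $2\sum_z\varepsilon_z^2\le0$. This forces equal in-neighbourhoods, and transitivity follows at once. For the linear edge count the same averaging gives only $e(G_{u\to v})+e(G_{v\to u})-2e(G)=0$, which forces $d(u)=d(v)$ and no more. That is exactly why your third case is needed: you clone $v$ onto both $u$ and $w$ and exploit the edge $uw$.
\item Theorem~\ref{thm:pointwise} is an Erdős-type alternative route. Given a $K_r$-free $G$, replace every edge by a digon to get a $\TT_r$-free digraph. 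Then Corollary~\ref{cor:partition} with $\varphi(t)=t$, combined with your balancing step applied to $\sum_i m_i(n-m_i)=n^2-\sum_i m_i^2$, recovers Turán's theorem including uniqueness.
\end{itemize}

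The second route is longer, but it yields pointwise degree domination, and hence handles every nondecreasing degree functional at once. Your symmetrization is shorter and more elementary, but it is tailored to the edge count.
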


For a graph $G$ and a vertex $v$ of $G$, let $d_G(v)$ be the degree of $v$ in $G$. It is well known that 
the size $e(G)=\frac{1}{2}\sum_{v\in V(G)}d_G(v)$. Thus, the ordinary Tur\'an problem can be viewed as the maximization of the sum of all vertex degrees. 
This naturally leads to the more general problem of maximizing
\[
\sum_{v\in V(G)}\varphi\!\left(d_G(v)\right)
\]
over all $H$-free graphs, where $\varphi$ is a nondecreasing function.

Caro and Yuster~\cite{CaroYuster} initiated the systematic study of the
power-sum version. For a positive integer $p$, they defined
\[
e_p(G)=\sum_{v\in V(G)}d_G(v)^p,
\qquad
\ex_p(n,H)
=
\max\left\{
e_p(G):
|V(G)|=n,\;
G\text{ is }H\text{-free}
\right\}.
\]
For forbidden complete graphs, they proved that the balanced Tur\'an graph
is extremal for $p=2$ and $p=3$, while it need not remain extremal for
$p\ge4$. They also determined $\ex_p(n,P_\ell)$ for every fixed path
$P_\ell$, every $p\ge2$, and all sufficiently large $n$. Bollob\'as and Nikiforov~\cite{BollobasNikiforov}
substantially sharpened the clique case.
For every fixed integer $r\ge2$ and real $p$ with $0<p<r$,
they proved that the balanced complete $r$-partite graph maximizes
the degree $p$-power sum among $n$-vertex $K_{r+1}$-free graphs
for all sufficiently large $n$.
In contrast, for $p\ge r+\lceil\sqrt{2r}\rceil$, this graph
is not extremal when $n$ is sufficiently large.

Analogous studies can be extended to digraphs. Throughout, a digraph is finite and loopless, with no parallel arcs in the same direction; opposite arcs are allowed and are counted separately. 

We write \(x\leftrightarrow y\) if both \(xy\) and \(yx\) are arcs, such a pair is called a \emph{digon}. We write \(\dd\!_m\) for the complete bidirected digraph on \(m\) vertices. More generally, Let $\dd\!_{n_1,\ldots,n_t}$ denote the complete bidirected multipartite digraph with independent parts of sizes $n_1,\ldots,n_t$, in which both opposite arcs are present between every two vertices in distinct parts. Let $\dT\!_p(n)$ denote the balanced complete bidirected $p$-partite digraph, i.e., the complete bidirected $p$-partite digraph whose $p$ part sizes differ by at most one; empty parts are allowed when $n<p$.

A digraph $D$ is $F$-free if it contains no copy of $F$ as a subdigraph. For a function
$\varphi\colon\mathbb Z_{\ge0}\to\mathbb R$, write
\[
\Phi_\varphi(D)
=
\sum_{v\in V(D)}
\varphi\!\left(d_D^+(v)\right),
\qquad
\ex_\varphi^+(n,F)
=
\max\left\{
\Phi_\varphi(D):
|V(D)|=n,\;
D\text{ is }F\text{-free}
\right\}.
\]

In the square case $\varphi(t)=t^2$, we abbreviate these quantities to
$\Phi(D)$ and $\ex_2^+(n,F)$. We also write $e(D)=|A(D)|$ and use
$\ex(n,F)$ for the ordinary maximum number of arcs in an $n$-vertex
$F$-free digraph. Since
\[
e(D)=\sum_{v\in V(D)}d_D^+(v),
\]
the ordinary Tur\'an problem for digraphs is the linear case, while
$\ex_2^+(n,F)$ is its quadratic refinement.
Ordinary Tur\'an problems for digraphs go back to Brown and
Harary~\cite{BrownHarary}. In particular, they proved the following theorem.
\begin{theorem}[Brown and
	Harary \cite{BrownHarary}]\label{thm:Brown-Harary}
    For every tournament $F$ on $r\ge2$ vertices,
\[
	\ex(n,F)=2\ex(n,K_r),
\]
where $\ex(n,K_r)$ denotes the ordinary undirected Tur\'an number.
\end{theorem}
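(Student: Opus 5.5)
We prove the two inequalities $\ex(n,F)\ge 2\ex(n,K_r)$ and $\ex(n,F)\le 2\ex(n,K_r)$ separately. The construction is the complete bidirected $(r-1)$-partite digraph $\dT\!_{r-1}(n)$. Every edge of the Tur\'an graph $T_{r-1}(n)$ becomes a digon, so $e(\dT\!_{r-1}(n))=2e(T_{r-1}(n))=2\ex(n,K_r)$.

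This digraph is $F$-free. The digraph $F$ is a tournament on $r$ vertices, so any two of its vertices are joined by an arc in some direction. A copy of $F$ therefore needs $r$ vertices that are pairwise adjacent in $\dT\!_{r-1}(n)$. Such vertices lie in pairwise distinct parts, and there are only $r-1$ parts, so no copy exists by pigeonhole.

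For the upper bound, let $D$ be an $F$-free digraph on $n$ vertices. Let $G$ be the undirected graph on $V(D)$ in which $xy$ is an edge exactly when $x\leftrightarrow y$ is a digon of $D$. Let $H$ be the underlying simple graph, in which $xy$ is an edge when at least one of $xy,yx$ is an arc. Each unordered pair contributes at most $2$ arcs if it is a digon and at most $1$ otherwise. Hence
\[
e(D)=e(H)+e(G)\le e(H)+e(G).
\]
We cannot simply say that $H$ is $K_r$-free, because an $r$-clique of $H$ whose pairs are single arcs can induce a tournament other than $F$.

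The key point is that an $r$-clique of $G$ does give a copy of $F$. All pairs in such a clique are digons, so we can select arcs in the directions prescribed by $F$. Thus $G$ is $K_r$-free, and Tur\'an's theorem gives $e(G)\le\ex(n,K_r)$.

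The main obstacle is that $H$ may contain $K_r$, for example when $D$ is a transitive tournament on $n$ vertices avoiding some non-transitive $F$. We therefore avoid bounding $e(H)$ by Tur\'an directly, and instead bound the weighted sum $\sum_{\text{pairs}} w(xy)$ on cliques, where $w=2$ on digons and $w=1$ on single arcs. The cleaner route is to show
\[
e(D)\le 2\ex(n,K_r)
\]
by induction on $n$ in the style of the standard proof of Tur\'an's theorem.

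For $n\le r-1$ the bound is trivial, since $e(D)\le n(n-1)=2\binom n2=2\ex(n,K_r)$. For $n\ge r$, we first show that every $r$-set contains a pair of weight at most $1$. Otherwise all $\binom r2$ pairs of the $r$-set are digons and we obtain $F$. So every $r$-set $S$ satisfies $e(D[S])\le r(r-1)-1$.

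We then use averaging. Summing over all $r$-subsets, each arc is counted $\binom{n-2}{r-2}$ times, which gives
\[
e(D)\le \frac{r(r-1)-1}{\binom{r-2}{r-2}\cdots}.
\]
This local bound is too weak for the exact value. The exact bound needs the Tur\'an-type induction instead. If $D$ is not bidirected-$K_{r-1}$-rich, we delete a vertex of minimum total degree and apply induction. Otherwise we take a bidirected $K_{r-1}$ on a set $S$ and observe that each outside vertex $v$ sends at most $2(r-1)-1$ arcs to $S$ in total (in plus out). If $v$ had all $2(r-1)$ arcs, then $S\cup\{v\}$ would be a bidirected $K_r\supseteq F$. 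Hence
\[
e(D)\le (r-1)(r-2)+(n-r+1)(2r-3)+e(D-S),
\]
and induction on $D-S$ gives
\[
e(D)\le 2\ex(n-r+1,K_r)+(r-1)(r-2)+(n-r+1)(2r-3).
\]
The Tur\'an recursion is
\[
\ex(n,K_r)=\ex(n-r+1,K_r)+\binom{r-1}2+(n-r+1)(r-2).
\]
Doubling it gives $2(n-r+1)(r-2)$ for the cross term, whereas our bound has $(n-r+1)(2r-3)$. This gap, of size $n-r+1$, is the real obstacle.

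To close the gap, I would strengthen the induction to a weighted statement. Assign weight $1$ to a single arc and $2$ to a digon, and allow a defect: prove that the number of digons plus the number of pairs joined by at least one arc is at most $2\ex(n,K_r)$. For this I would use Zykov symmetrization on the weighted graph. Vertices $u,v$ joined by no digon can be cloned, and the weight function on cliques is maximized by complete $(r-1)$-partite structures. Making this symmetrization work is the step I expect to be hardest.
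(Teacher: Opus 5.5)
Your lower-bound construction and the pigeonhole argument are correct. The upper bound $\ex(n,F)\le 2\ex(n,K_r)$, however, is never established, and it is the entire content of the theorem; the paper quotes the result from Brown and Harary without proof. Part of the obstacle you isolate comes only from a non-sharp count. Suppose $S$ spans a bidirected $K_{r-1}$ and some $v\notin S$ is joined to $S$ by $2r-3$ arcs. Then in $S\cup\{v\}$ every pair is a digon except one single arc, say $v\to s$. Send the tail and head of any arc of $F$ to $v$ and $s$, and the other $r-2$ vertices of $F$ to $S\setminus\{s\}$; this embeds $F$. So every outside vertex meets $S$ in at most $2(r-2)$ arcs, and likewise every $r$-set spans at most $r(r-1)-2$ arcs, not $r(r-1)-1$. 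Hence
\[
  e(D)\le (r-1)(r-2)+2(r-2)(n-r+1)+2\ex(n-r+1,K_r)=2\ex(n,K_r)
\]
whenever $D$ contains a bidirected $K_{r-1}$, with no loss at all. For $r=3$ the complementary case is trivial: a digraph without digons has at most $\binom n2\le 2\lfloor n^2/4\rfloor$ arcs. So with this correction your induction proves the case $r=3$.

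The genuine gap is the case $r\ge 4$ in which $D$ has no bidirected $K_{r-1}$. The induction on $n$ needs this case too, since $D-S$ may fall into it. Deleting a vertex of minimum total degree needs a vertex of total degree at most $2(\ex(n,K_r)-\ex(n-1,K_r))$. Averaging cannot supply one, because $2e(D)/n$ is at least this quantity as soon as $e(D)\ge 2\ex(n,K_r)$. Meanwhile the bound $e(D)\le\binom n2+\ex(n,K_{r-1})$ coming from the digon graph exceeds $2\ex(n,K_r)$ for large $n$.

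Your fallback does not fill the hole. The ``weighted statement'' is just $e(D)\le 2\ex(n,K_r)$ restated. Cloning is free only for nonadjacent pairs, as in Lemma~\ref{lem:reg5-symmetrization}; across a single arc the clone either loses that arc or, if it is kept, may create $F$.

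More fundamentally, no argument that uses only digon cliques or local arc counts can succeed. For $r=4$, take the bidirected complete bipartite digraph with parts of size $n/2$ and a tournament inside each part. It has no bidirected triangle and at most $10=r(r-1)-2$ arcs on every $4$-set. Yet it has $(3n^2-2n)/4>2\ex(n,K_4)$ arcs for even $n\ge 8$, for example $102>96$ at $n=12$. It contains every $4$-vertex tournament, through two disjoint single arcs in a $2+2$ set. A proof therefore has to exploit how the single arcs sit relative to $F$. For example, any $r$-set in which all pairs are adjacent and the single arcs form a matching contains $F$, since $F$ has a Hamiltonian path. Your proposal contains no step of this kind.
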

Thus the ordinary arc extremum for
a forbidden tournament depends only on its order, whereas nonlinear degree
functionals can distinguish its orientation.

The systematic study of the degree-square refinement for digraphs was
initiated only very recently by Lin, Wang, Zhou and
Zhou~\cite{LinWangZhouZhou}. They defined the degree squared sum of a
digraph and posed the general problem of maximizing it under a
forbidden-digraph condition. As a principal example, writing $\TT_r$ for
the transitive tournament on $r$ vertices, they proved that every
$n$-vertex $\TT_r$-free digraph satisfies
\begin{equation*}
	\max\left\{
	\sum_{v\in V(D)}d_D^+(v)^2,\,
	\sum_{v\in V(D)}d_D^-(v)^2
	\right\}
	\le
	\left(\frac{r-2}{r-1}\right)^2n^3.
\end{equation*}
Their proof uses the out-neighborhood of a maximum-out-degree vertex and
induction on $r$.

Ai et al.~\cite{AiEtAl} subsequently formulated the
one-sided degree-square Tur\'an number $\ex_2^+(n,F)$ in connection with
finite palette extremality. They developed a prefix-majorization method for
nondecreasing convex out-degree functionals, determined the corresponding
functionals for directed cycles, and obtained the exact result for $\TT_3$. They asked for which self-converse tournaments $F$ the degree-square Tur\'an number $\ex_2^+(n,F)$ can be determined exactly. I\v{l}kovi\v{c}~\cite{Ilkovic} then determined the
exact degree-square Tur\'an numbers for $\TT_4$ and the strongly
connected four-vertex tournament $R_4$. Let $\Reg$ be the unique regular tournament on five vertices. For convenience, we construct $\Reg$ as follows: the vertex set is $\mathbb Z_5$, and
\[
  i\to j
  \quad\Longleftrightarrow\quad
  j-i\in\{1,2\}\pmod 5.
\]
\begin{conjecture}[I\v{l}kovi\v{c}~\cite{Ilkovic}]\label{C1} For every $n$, $\ex_2^+(n,\Reg)=\Phi(\dT\!_4(n))$.
\end{conjecture}

Note that both $\TT_r$ and $\Reg$ are
self-converse. In this paper, we study the degree-square Tur\'an problem for $\TT_r$ and $\Reg$.

We first establish a pointwise out-degree domination theorem for $\TT_r$-free digraphs.

\begin{theorem}\label{thm:pointwise}
Let $r\ge2$ and $m\ge1$, and let $D$ be an $m$-vertex $\TT_r$-free digraph. Put $k=\min\{m,r-1\}$. There exists a complete bidirected $k$-partite digraph $H$, with $k$ nonempty parts and on the same vertex set as $D$, such that
\[
  d_D^+(v)\le d_H^+(v)\qquad\text{for every }v\in V(D).
\]
Moreover,
\[
  d_D^+(v)=d_H^+(v)\text{ for every }v\in V(D)
  \quad\Longrightarrow\quad D=H.
\]
\end{theorem}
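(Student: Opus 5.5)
We argue by induction on $r$, in the spirit of the Lin--Wang--Zhou--Zhou argument via the out-neighbourhood of a maximum-out-degree vertex. For $r=2$ the digraph $D$ is $\TT_2$-free, so it has no arcs and $k=1$. Take $H$ to be the complete bidirected $1$-partite digraph, which is also arcless; then both claims are trivial. The case $m\le r-1$ is also easy: take $H=\dd\!_m$, with all parts singletons. Every out-degree in $D$ is at most $m-1=d_H^+(v)$, and equality everywhere forces all arcs, so $D=H$. Hence assume $r\ge3$ and $m\ge r$, so that $k=r-1$.

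Choose a vertex $u$ of maximum out-degree $\Delta=d_D^+(u)$. Let $N=N_D^+(u)$ and $M=V(D)\setminus N$, so $u\in M$ and $|M|=m-\Delta\ge1$. The subdigraph $D[N]$ is $\TT_{r-1}$-free, since a copy of $\TT_{r-1}$ in $N$ together with $u$ as a source would give a $\TT_r$. We may assume $N\neq\emptyset$; if $N=\emptyset$, then $D$ is arcless and any $H$ works. By induction applied to $D[N]$ with $k'=\min\{|N|,r-2\}$, there is a complete bidirected $k'$-partite digraph $H'$ on $N$ with $d^+_{D[N]}(v)\le d^+_{H'}(v)$ for all $v\in N$. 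We then set $H$ to be $H'$ with $M$ added as one more part, fully bidirected to $N$.

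We check the domination.
\begin{itemize}
\item For $v\in M$: $d_D^+(v)\le\Delta=|N|=d_H^+(v)$.
\item For $v\in N$: $d_D^+(v)\le d^+_{D[N]}(v)+|M|\le d^+_{H'}(v)+|M|=d_H^+(v)$.
\end{itemize}
The number of parts is $k'+1$. If $|N|\ge r-2$, this equals $r-1=k$. If $|N|<r-2$, we get fewer than $k$ parts, and we must split parts. Splitting a part only increases out-degrees in a complete multipartite digraph, and since $m\ge r$ there are enough vertices. So we repeatedly split a part of size at least $2$ until there are exactly $k$ nonempty parts. Splitting destroys the equality structure, which we handle in the next step.

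For the rigidity claim, suppose $d_D^+=d_H^+$ everywhere. If any splitting occurred, some vertex's $H$-degree strictly exceeded its bound, so equality is impossible unless no splitting happened. Hence $|N|\ge r-2$ and $H$ is the unsplit digraph. Equality on $M$ gives $d_D^+(v)=\Delta=d_H^+(v)$. Equality on $N$ forces both inequalities in the chain for $v\in N$ to be tight. This means every $v\in N$ sends arcs to all of $M$, and $d^+_{D[N]}(v)=d^+_{H'}(v)$, so by induction $D[N]=H'$.

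The main obstacle is showing that $D[M]$ has no arcs and that every $v\in M$ sends all its out-arcs into $N$. Since $|N|\ge r-2$ and $H'$ has $r-2$ nonempty parts, $D[N]=H'$ contains a bidirected $\dd\!_{r-2}$, obtained by choosing one vertex per part. Every vertex of $N$ also points to every vertex of $M$. Now suppose $v\in M$ has an out-neighbour $w\in M$. We seek a $\TT_r$ formed by $v$, $w$ and a transversal $\{x_1,\dots,x_{r-2}\}$ of $H'$ chosen inside $N_D^+(v)$.

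For such a transversal, the arcs $x_i\to w$ and $x_i\to v$ are present, so we order the $\TT_r$ as $x_1,\dots,x_{r-2},v,w$. This uses the arcs $x_i\to v$, $x_i\to w$, $v\to w$ and the digons inside the transversal, so $v\to x_i$ is not even needed. Thus $x_1,\dots,x_{r-2},v,w$ induce a $\TT_r$, which is a contradiction. Hence $D[M]$ is arcless, and $d_D^+(v)=\Delta=|N|$ forces $v\to N$ entirely for each $v\in M$. Therefore $D=H$.
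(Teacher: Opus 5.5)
Your proposal is correct and follows the paper's proof essentially step for step: induction on $r$ via the out-neighbourhood $N$ of a maximum-out-degree vertex, adding $M=V(D)\setminus N$ as one extra part, splitting parts when $|N|<r-2$ (so that the equality case cannot occur), and, in the equality case, using a transversal $x_1,\dots,x_{r-2}$ of $H'$ followed by an arc $v\to w$ inside $M$ to produce a $\TT_r$. The only blemishes are cosmetic: any transversal works, so the transversal need not lie in $N_D^+(v)$ (as you note yourself), and the $r$ vertices \emph{contain} rather than \emph{induce} a copy of $\TT_r$, which is all that $\TT_r$-freeness requires.
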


\begin{corollary}\label{cor:partition}
Let $\varphi\colon\mathbb Z_{\ge0}\to\mathbb R$ be nondecreasing, let $r\ge2$ and $m\ge1$, and put $k=\min\{m,r-1\}$. Then
\[
  \ex_\varphi^+(m,\TT_r)
  =\max_{\substack{m_1+\cdots+m_k=m\\m_i\ge1}}
    \sum_{i=1}^k m_i\varphi(m-m_i).
\]
If $\varphi$ is strictly increasing, an $m$-vertex $\TT_r$-free digraph is extremal if and only if it is a complete bidirected $k$-partite digraph whose part sizes attain this maximum.
\end{corollary}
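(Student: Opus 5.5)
The corollary follows from Theorem~\ref{thm:pointwise} by comparing any $\TT_r$-free digraph with the multipartite digraph it provides; the work is in checking both directions of the extremal characterization.

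\emph{Lower bound.} Every complete bidirected $k$-partite digraph $H$ with parts of sizes $m_1,\dots,m_k$ is $\TT_r$-free. Indeed, any tournament on $r$ vertices inside $H$ would need its $r$ vertices pairwise adjacent, so they would lie in distinct parts, which is impossible since $k\le r-1<r$. In $H$ every vertex of the $i$-th part has out-degree $m-m_i$, so
\[
  \Phi_\varphi(H)=\sum_{i=1}^k m_i\varphi(m-m_i).
\]
Taking the maximizing partition gives $\ex^+_\varphi(m,\TT_r)\ge\max\sum_i m_i\varphi(m-m_i)$.

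\emph{Upper bound.} Let $D$ be any $m$-vertex $\TT_r$-free digraph, and let $H$ be the complete bidirected $k$-partite digraph with nonempty parts given by Theorem~\ref{thm:pointwise}. Since $d_D^+(v)\le d_H^+(v)$ for every $v$ and $\varphi$ is nondecreasing,
\[
  \Phi_\varphi(D)=\sum_v\varphi\bigl(d_D^+(v)\bigr)\le\sum_v\varphi\bigl(d_H^+(v)\bigr)=\sum_{i=1}^k m_i\varphi(m-m_i),
\]
where $m_i\ge1$ are the part sizes of $H$. This is at most the stated maximum, which proves the equality.

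\emph{Extremal characterization, for $\varphi$ strictly increasing.} The ``if'' direction is the lower-bound computation: such an $H$ is $\TT_r$-free and attains the maximum. For ``only if'', suppose $D$ is extremal and take $H$ from the theorem. Both inequalities in the chain above are then equalities. Because $\varphi$ is strictly increasing, each term satisfies $\varphi(d_D^+(v))\le\varphi(d_H^+(v))$, and equality of the sums forces $d_D^+(v)=d_H^+(v)$ for every $v$. The ``moreover'' part of Theorem~\ref{thm:pointwise} then gives $D=H$. Equality in the last step means the part sizes of $H$ attain the maximum.

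The only delicate point is the first equality forcing termwise equality, which needs strict monotonicity and the fact that each termwise inequality goes the same way. This is routine, so the main content lies entirely in Theorem~\ref{thm:pointwise}. One more check: the maximum should range only over partitions with every $m_i\ge1$. This matches the theorem, since it guarantees $k$ nonempty parts, and $k\le m$ ensures such partitions exist.
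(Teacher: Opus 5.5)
Your proposal is correct and follows the paper's proof essentially step for step. The upper bound comes from pointwise domination via Theorem~\ref{thm:pointwise} and monotonicity of $\varphi$, the lower bound from $\TT_r$-freeness of complete bidirected $k$-partite digraphs, and the equality case from strict monotonicity forcing $d_D^+(v)=d_H^+(v)$ followed by the rigidity clause.
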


Together with Erd\H{o}s' undirected domination theorem~\cite{Erdos}, this gives
\begin{equation*}
  \ex_\varphi^+(m,\TT_r)
  =\max_{\substack{|V(G)|=m\\K_r\nsubseteq G}}
    \sum_{v\in V(G)}\varphi\!\left(d_G(v)\right)
\end{equation*}
for every nondecreasing $\varphi$, where $G$ ranges over simple undirected graphs.

 For the square function, the partition problem has a unique balanced solution.

\begin{theorem}\label{thm:transitive-square}
Let $r\ge2$ and $m\ge1$, put $p=r-1$, and write $m=pq+s$ with $0\le s<p$. Then
\[
  \ex_2^+(m,\TT_r)
  =s(q+1)(m-q-1)^2+(p-s)q(m-q)^2.
\]
An $m$-vertex $\TT_r$-free digraph attains this value if and only if it is isomorphic to $\dT\!_p(m)$.
\end{theorem}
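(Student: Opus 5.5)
By Corollary~\ref{cor:partition} with $\varphi(t)=t^2$, which is strictly increasing on $\mathbb Z_{\ge0}$, I will reduce the theorem to a purely arithmetic optimization. Put $k=\min\{m,p\}$. Then
\[
  \ex_2^+(m,\TT_r)=\max_{\substack{m_1+\cdots+m_k=m\\ m_i\ge1}} \sum_{i=1}^k f(m_i),\qquad f(x)=x(m-x)^2 .
\]
Moreover, the extremal digraphs are exactly the complete bidirected $k$-partite digraphs whose part sizes attain this maximum. The plan is therefore to show that the maximum is attained only by balanced part sizes. If $m<p$, then $k=m$ and all $m_i=1$, which coincides with $\dT\!_p(m)$ having $p-m$ empty parts. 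In that case $q=0$, $s=m$, and the formula reduces to $m(m-1)^2$, as required. So I may assume $m\ge p$, so that $k=p$ and the $p$ parts are nonempty.

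The core step is a discrete smoothing argument. Suppose some pair satisfies $m_i\ge m_j+2$. I will show that replacing $(m_i,m_j)$ by $(m_i-1,m_j+1)$ strictly increases $f(m_i)+f(m_j)$. Write $a=m_i-1$ and $b=m_j$, so that $a\ge b+1$. Then the change equals
\[
  \Delta=\bigl[f(b+1)-f(b)\bigr]-\bigl[f(a+1)-f(a)\bigr].
\]
Define $g(x)=f(x+1)-f(x)$. A direct expansion gives
\[
  g(x)=(m-x-1)^2-2x(m-x)+x\quad\text{(or an equivalent quadratic in }x).
\]
I will compute $g$ exactly and show it is strictly decreasing on the range $0\le x\le m-1$. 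This is the same as discrete concavity of $f$ there. The continuous analogue is $f''(x)=6x-4m$, which is negative only for $x<2m/3$. So concavity does not hold on the whole range, and this is the main obstacle.

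To handle it, I will use the constraint on the parts. Since $p\ge 2$ parts are nonempty, at most one part can exceed $m/2$, and in the relevant comparisons $a+b\le m-1$. I will check directly that $g(b)>g(a)$ whenever $0\le b<a$ and $a+b\le m-1$. Since $g$ is quadratic, $g(b)-g(a)$ factors as $(a-b)\cdot L(a+b)$ for a linear function $L$, and I then only need $L(a+b)>0$ for $a+b\le m-1$. Explicitly, $g(x)=3x^2-(4m-3)x+(m-1)^2$, up to an arithmetic check. This gives
\[
  g(b)-g(a)=(a-b)\bigl(4m-3-3(a+b)\bigr),
\]
which is positive because $3(a+b)\le 3m-3<4m-3$. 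Hence every unbalanced partition is strictly improved by smoothing, so the maximum is attained uniquely, up to permutation, at the balanced partition.

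The balanced partition consists of $s$ parts of size $q+1$ and $p-s$ parts of size $q$, and its value is $s(q+1)(m-q-1)^2+(p-s)q(m-q)^2$. The uniqueness clause of Corollary~\ref{cor:partition} then identifies the extremal digraph as $\dT\!_p(m)$, which completes the plan.
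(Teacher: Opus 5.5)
Your proposal is correct and follows essentially the paper's route: you reduce via Corollary~\ref{cor:partition} to the partition problem and then smooth unbalanced pairs, and your factorization $g(b)-g(a)=(a-b)\bigl(4m-3-3(a+b)\bigr)$ is exactly the identity $(x_i-x_j-1)(4M-3x_i-3x_j)$ of Lemma~\ref{lem:balance}. The differences are cosmetic: you treat $m<p$ separately rather than allowing empty parts in the balancing lemma, and $a+b\le m-1$ follows directly from $m_i+m_j\le m$, so your remark about parts exceeding $m/2$ is not needed.
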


 %I\v{l}kovi\v{c}~\cite{Ilkovic} conjectured that $T_4(n)$ attains $\ex_2^+(n,\Reg)$ for every $n$. 
 The next theorem confirms Conjecture \ref{C1} and characterizes all extremal digraphs.

\begin{theorem}\label{thm:reg5-main}
For every integer $n\ge1$, write $n=4q+\rho$ with $0\le\rho<4$. Then
\[
  \ex_2^+(n,\Reg)
  =\rho(q+1)(3q+\rho-1)^2+(4-\rho)q(3q+\rho)^2.
\]
An $n$-vertex $\Reg$-free digraph attains this value if and only if it is isomorphic to $\dT\!_4(n)$.
\end{theorem}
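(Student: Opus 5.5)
The plan is to prove the upper bound $\Phi(D)\le\Phi(\dT\!_4(n))$ by induction on $n$, with uniqueness of the extremal digraph tracked along the way. The lower bound is immediate. $\dT\!_4(n)$ contains no tournament on $5$ vertices, because by pigeonhole two of any $5$ vertices lie in the same (independent) part. Its value is the stated formula, since each vertex of a part of size $m_i$ has out-degree $n-m_i$.

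For small $n$ ($n\le 4$) every digraph is $\Reg$-free, so the maximum is attained uniquely by $\dd\!_n=\dT\!_4(n)$ and the claim is trivial. For the inductive step, let $D$ be an extremal $\Reg$-free digraph on $n\ge5$ vertices. I would first reduce to the case where $D$ contains a copy of $\TT_5$. If $D$ is also $\TT_5$-free, Theorem~\ref{thm:transitive-square} with $r=5$ gives $\Phi(D)\le\Phi(\dT\!_4(n))$, with equality only for $\dT\!_4(n)$, and we are done. So we may assume $D$ contains a copy $S$ of $\TT_5$.

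The core step is a deletion argument. Write $\Phi(D)=\Phi(D-S)+\Delta$, where $\Delta$ collects the squares of the out-degrees of the vertices of $S$ together with the increments $d_D^+(v)^2-d_{D-S}^+(v)^2$ for $v\notin S$. The aim is to show $\Delta\le \Phi(\dT\!_4(n))-\Phi(\dT\!_4(n-5))$, with strict inequality unless the structure is forced. For the inductive target to work out, one would rather delete a block of $4$ vertices, since $\dT\!_4(n)$ minus one vertex from each part is $\dT\!_4(n-4)$. The precise plan is therefore as follows.
\begin{itemize}
\item Pick four vertices $x_1,\dots,x_4$ of $S$.
\item Bound, for each outside vertex $v$, the number of arcs between $v$ and $S$ in each direction.
\item Use the key local lemma: if $v$ sends and receives arcs to and from $S$ in too rich a pattern, then $v$ together with four vertices of $S\cup\{v\}$ spans $\Reg$.
\end{itemize}
Concretely, I would enumerate the possible "profiles" $(N^+(v)\cap S, N^-(v)\cap S)$ and show that any profile with $|N^+(v)\cap S|+|N^-(v)\cap S|\ge 9$ (nearly all digons) creates $\Reg$. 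The tournament $\TT_5$ with one vertex reversed appropriately yields $\Reg$, since $\Reg$ arises from $\TT_5$ by reversing a Hamiltonian-like path of arcs. This gives the arc-count bound $e(v,S)\le 8=2\cdot 4$, matching Brown--Harary locally. With that bound, convexity of $t^2$ and a Cauchy--Schwarz/majorization comparison in the style of Ai et al.'s prefix-majorization should show $\Delta$ is at most the corresponding increment for $\dT\!_4$.

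The main obstacle is that an arc-count bound alone does not control the quadratic functional. Out-arcs count toward $d^+$, while in-arcs count only linearly in other vertices' degrees. A vertex with many out-arcs into $S$ gains quadratically even if its total number of arcs to $S$ is $8$. My first attempt at overcoming this would be a weighted count. The increment at $v\notin S$ is $2d^+_{D-S}(v)\,|N^+(v)\cap S|+|N^+(v)\cap S|^2$, so the key is a bound on $|N^+(v)\cap S|$ in terms of the other data, combined with the inequality $d^+_{D-S}(v)\le n-5$. I expect to need a sharper local claim: each $v$ has $|N^+(v)\cap S|\le 4$, with equality forcing $v$ to behave like a vertex in a $4$-partite blow-up. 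I would then finish by analysing equality cases to recover $D\cong\dT\!_4(n)$. If the $\TT_5$ deletion proves too lossy, the fallback is to choose $S$ as a $\TT_5$ maximizing the total out-degree, which gives extra leverage in the comparison.
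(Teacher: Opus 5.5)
\textbf{There is a genuine gap: the local lemma your deletion step relies on is false, and no lemma of that kind can hold around a transitive core.}

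Your first reduction is sound. By Theorem~\ref{thm:transitive-square} with $r=5$, a $\TT_5$-free digraph has $\Phi(D)\le B_4(n)$, with equality only for $\dT\!_4(n)$. So it remains to prove $\Phi(D)<B_4(n)$ for $\Reg$-free digraphs containing a $\TT_5$. That remaining case carries the entire difficulty, and your plan for it does not work.

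Here is a counterexample. Let $S$ be the transitive tournament $s_1,\dots,s_5$ with $s_i\to s_j$ for $i<j$. Add an independent set $V$ of any size, each vertex of which forms a digon with every $s_i$.
\begin{itemize}
\item[(i)] The digraph is $\Reg$-free. A $5$-vertex tournament in it uses at most one vertex of $V$. If it uses none, it is $\TT_5$. If it uses some $v\in V$, its other four vertices span the uniquely oriented $\TT_4$, whereas every four-vertex subtournament of $\Reg$ is $R_4\not\cong\TT_4$.
\item[(ii)] Every $v\in V$ is joined to all of $S$ in both directions, so $e(v,S)=10$ and $|N^+(v)\cap S|=5$. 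This refutes both the claim that profiles with at least $9$ arcs create $\Reg$ and the claim $|N^+(v)\cap S|\le4$.
\item[(iii)] Once $|V|>5$, $S$ is also the copy of $\TT_5$ with largest total out-degree, so your fallback choice does not help.
\end{itemize}
The root cause is that $\Reg$ contains no $\TT_4$. A vertex together with four vertices of an induced transitive core can never span $\Reg$, so every profile relative to such a core is compatible with $\Reg$-freeness.

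Deleting four vertices of $S$ fares no better. Outside vertices may have $h_x=k_x=4$, and then the increments $2d^+_{D-C}(x)h_x+h_x^2$ cannot be matched against $B_4(n)-B_4(n-4)$. The bound $d^+_{D-S}(v)\le n-5$ is also too lossy: it overshoots the typical out-degree $\approx 3n/4$ in $\dT\!_4(n)$ by a constant factor.

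The paper uses a different structural handle. It first symmetrizes an extremal digraph (Lemma~\ref{lem:reg5-symmetrization}) into an independent blow-up of a $\Reg$-free semicomplete quotient $Q$ with the same out-degrees. It then splits according to whether $Q$ contains a $\dd\!_4$, not a $\TT_5$.
\begin{itemize}
\item[(a)] If $Q$ contains a $\dd\!_4$, one deletes one vertex from each of the four core classes. Because all core pairs are digons, every external vertex satisfies $h_x\le1$ or $k_x\le1$ (Lemma~\ref{lem:core-types}). This is exactly what gives $\Phi(D)-\Phi(D-C)\le B_4(n)-B_4(n-4)-t$.
\item[(b)] If $Q$ contains no $\dd\!_4$, a separate global argument is used. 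The digon graph is $K_{2,3}$-free with $R_4$-free digon neighbourhoods, which yields a vertex of total degree $<\frac{10}{7}n$. Hence $e(D[W])\le\frac57|W|^2$ for every $W$, and weak majorization gives $\Phi(D)\le\frac{79}{147}n^3-\frac{4}{147}n<B_4(n)$ for $n\ge5$.
\end{itemize}
Your $\TT_5$-free reduction does not replace case (b), since quotients without $\dd\!_4$ may well contain $\TT_5$. So even granting your reduction, both of these ingredients are still needed.
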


This paper is organized as follows. Section~2 proves the results for transitive tournaments.
Section~3 collects the symmetrization framework, local restrictions, and some notation for the tournament $\Reg$.
Section~4 treats the case in which the symmetrized quotient contains no
bidirected $K_4$.
Section~5 handles the remaining case by means of a four-vertex deletion
argument and completes the proof of Theorem~\ref{thm:reg5-main}.
Finally, Section~6 gives some concluding remarks and possible directions
for further work.

\section{Transitive tournaments}\label{sec:transitive}

We use the following elementary balancing calculation.

\begin{lemma}\label{lem:balance}
Let $M,t,h$ be positive integers with $t\le M$. Among all $h$-tuples of nonnegative integers
$x_1,\ldots,x_h$ with $\sum_i x_i=t$, the quantity
\[
  \sum_{i=1}^h x_i(M-x_i)^2
\]
is maximized if and only if the $x_i$ differ by at most one.
\end{lemma}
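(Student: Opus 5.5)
The plan is a discrete exchange (smoothing) argument. Write $f(x)=x(M-x)^2$. The function $f$ is not concave on all of $[0,M]$, since $f''(x)=6x-4M$, so I will not appeal to Jensen. Instead I will show directly that moving one unit from a larger coordinate to a smaller one strictly increases the sum whenever the two coordinates differ by at least two. The constraint $\sum_i x_i=t\le M$ is what makes this work.

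First compute the forward difference $g(x)=f(x+1)-f(x)$. Expanding $f(x)=M^2x-2Mx^2+x^3$ gives
\[
g(x)=M^2-2M(2x+1)+3x^2+3x+1 .
\]
Now take two coordinates $x_i=a$ and $x_j=b$ with $a\ge b+2$, and replace them by $a-1$ and $b+1$. The change in the objective is
\[
\Delta=f(b+1)-f(b)-\bigl(f(a)-f(a-1)\bigr)=g(b)-g(a-1).
\]
A short factorization, using $y^2+y-z^2-z=(y-z)(y+z+1)$, gives
\[
\Delta=(a-1-b)\bigl(4M-3(a+b)\bigr).
\]
Here $a-1-b\ge1$. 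Also $a+b\le t\le M$, so $4M-3(a+b)\ge M>0$. Hence $\Delta>0$.

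The conclusion then follows. The feasible set is finite, so a maximum exists. Any tuple in which some two coordinates differ by at least two is not a maximizer, by the exchange step. Every tuple whose entries differ by at most one is a permutation of the unique balanced multiset: it has $t\bmod h$ entries equal to $\lfloor t/h\rfloor+1$ and the remaining entries equal to $\lfloor t/h\rfloor$. All such tuples therefore give the same value, which must be the maximum. This proves both directions of the ``if and only if''.

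The only real obstacle is the sign of the second factor in $\Delta$. This is exactly where the hypothesis $t\le M$ enters, and it explains why convexity alone does not suffice. Everything else is routine algebra.
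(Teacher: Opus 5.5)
Your proposal is correct and follows the paper's proof essentially verbatim: the same one-unit transfer from a larger to a smaller coordinate, the identical factorization $(a-1-b)\bigl(4M-3(a+b)\bigr)$, and the same use of $a+b\le t\le M$ to make the second factor at least $M$. Your explicit observation that all balanced tuples share one multiset, and hence the maximal value, spells out the ``if'' direction that the paper leaves implicit in ``repeating this operation.''
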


\begin{proof}
Let $f(x)=x(M-x)^2$. Suppose $x_i\ge x_j+2$. Then
\[
  f(x_i-1)+f(x_j+1)-f(x_i)-f(x_j)
  =(x_i-x_j-1)(4M-3x_i-3x_j).
\]
The first factor is positive. Since $x_i+x_j\le t\le M$, the second factor is at least $M>0$.
Moving one unit from $x_i$ to $x_j$ therefore strictly increases the sum. Repeating this
operation proves the lemma.
\end{proof}

\begin{proof}[\bfseries{Proof of Theorem~\ref{thm:pointwise}}]
We prove the statement by induction on $r$. For $r=2$, a $\TT_2$-free digraph has no arcs.
More generally, if $m<r$, take $H=\dd\!_m$, viewed as a complete bidirected $m$-partite
digraph with singleton parts. Then
\[
  d_D^+(v)\le m-1=d_H^+(v)
\]
for every $v$, and equality for all vertices forces $D=H$.

Assume $r\ge3$ and $m\ge r$, and suppose the theorem holds for $\TT_{r-1}$. Choose a vertex
$w$ of maximum out-degree and write
\[
  \Delta=d_D^+(w),\qquad X=N_D^+(w),\qquad
  Y=V(D)\setminus X,\qquad c=|Y|=m-\Delta.
\]
Since $w\in Y$, we have $c\ge1$. The digraph $D[X]$ is $\TT_{r-1}$-free. By induction there
is a complete bidirected $t$-partite graph $H_X$ on $X$, where
$t=\min\{\Delta,r-2\}$, such that
\[
  d_{D[X]}^+(x)\le d_{H_X}^+(x)\qquad(x\in X),
\]
with the corresponding rigidity property. If $X=\varnothing$, take $H_X$ empty and $t=0$.

Form $H_0$ by taking $H_X$ on $X$, making $Y$ one independent part, and placing both
opposite arcs between every vertex of $X$ and every vertex of $Y$. For $x\in X$,
\[
  d_D^+(x)\le d_{D[X]}^+(x)+c
  \le d_{H_X}^+(x)+c=d_{H_0}^+(x),
\]
and for $y\in Y$, maximality of $d_D^+(w)$ gives
\[
  d_D^+(y)\le\Delta=|X|=d_{H_0}^+(y).
\]
Thus $H_0$ pointwise dominates $D$.

The graph $H_0$ has $t+1\le r-1$ nonempty parts. If $t+1<r-1$, then because $m\ge r$
we can repeatedly split a non-singleton part into two nonempty parts and insert all digons
between them until exactly $r-1$ nonempty parts are present. Every such split strictly increases
the out-degree of vertices in the split part and leaves all other out-degrees unchanged. The
resulting $H$ still dominates $D$, and at least one vertex is strictly dominated; hence the rigidity
implication is automatic.

It remains to consider $t+1=r-1$. Set $H=H_0$ and suppose
\[
  d_D^+(v)=d_H^+(v)\qquad\text{for every }v\in V(D).
\]
Equality must hold throughout the two inequalities for each $x\in X$. Thus
\[
  d_{D[X]}^+(x)=d_{H_X}^+(x)\qquad\text{for every }x\in X,
\]
every arc from $X$ to $Y$ is present, and inductive rigidity gives $D[X]=H_X$. Since
$t=r-2$, write the nonempty parts of $H_X$ as $A_1,\ldots,A_{r-2}$.

We claim that $Y$ is independent. If $yz$ were an arc of $D[Y]$, choose $x_i\in A_i$ for each
$i$. Between distinct $A_i$ both arcs are present, and all arcs from $X$ to $Y$ are present.
Hence the order
\[
  x_1,x_2,\ldots,x_{r-2},y,z
\]
contains a copy of $\TT_r$, a contradiction. Thus $Y$ is independent. Finally every $y\in Y$
satisfies
\[
  d_D^+(y)=d_H^+(y)=\Delta=|X|.
\]
Since $Y$ is independent, $y$ sends an arc to every vertex of $X$. Hence every pair across
$X,Y$ is a digon and $D=H$.
\end{proof}

\begin{proof}[\bfseries{Proof of Corollary~\ref{cor:partition}}]
Let $D$ be any $m$-vertex $\TT_r$-free digraph and let $H$ be supplied by
Theorem~\ref{thm:pointwise}. Since $\varphi$ is nondecreasing,
\[
  \Phi_\varphi(D)\le\Phi_\varphi(H).
\]
If the $k$ nonempty parts of $H$ have sizes $m_1,\ldots,m_k$, then every vertex in part $i$
has out-degree $m-m_i$, so
\[
  \Phi_\varphi(H)=\sum_{i=1}^k m_i\varphi(m-m_i).
\]
Conversely, every complete bidirected $k$-partite digraph is $\TT_r$-free, giving the reverse
inequality.

If $\varphi$ is strictly increasing and $D$ is extremal, then $H$ is also $\TT_r$-free, so extremality
and pointwise domination force
\[
  d_D^+(v)=d_H^+(v)\qquad\text{for every }v.
\]
The rigidity clause of Theorem~\ref{thm:pointwise} yields $D=H$. The converse is immediate.
\end{proof}

\begin{remark}
The strict monotonicity assumption cannot be omitted in general from the equality classification. For example, if $\varphi$ is constant,
every $\TT_r$-free digraph is extremal. Thus nondecreasing functions admit a multipartite
extremizer, whereas strictly increasing functions force every extremizer to be multipartite.
\end{remark}

\begin{proof}[\bfseries{Proof of Theorem~\ref{thm:transitive-square}}]
Put $p=r-1$. By Corollary~\ref{cor:partition},
\[
  \ex_2^+(m,\TT_r)=\max\sum_{i=1}^p m_i(m-m_i)^2,
\]
where zero parts may be included when $m<p$. Lemma~\ref{lem:balance}, with $M=t=m$ and
$h=p$, shows that the unique maximizing multiset of part sizes is the balanced $p$-partition.
Thus if $m=pq+s$, exactly $s$ parts have size $q+1$ and the remaining $p-s$ parts have size
$q$. Substitution gives the stated formula. Since $t^2$ is strictly increasing on the nonnegative
integers, Corollary~\ref{cor:partition} gives the equality classification.
\end{proof}

\section{Preliminaries for the tournament \texorpdfstring{$\text{RT}_5$}{RT5}}\label{sec:reg5-prelim}

A digraph is \emph{semicomplete} if every two distinct vertices are joined by at least one arc. Let $Q$ be a semicomplete digraph. Replace each vertex $u\in V(Q)$ by a nonempty independent set $V_u$, and for each quotient arc $u\to v$ insert all arcs from $V_u$ to $V_v$. The resulting digraph is called an \emph{independent blow-up} of $Q$. Class sizes always refer to the numbers of actual vertices. If $F$ is a tournament, then an independent blow-up contains $F$ if and only if its quotient contains $F$. An induced subdigraph of an independent blow-up is again an independent blow-up of the corresponding induced quotient after empty classes are deleted.

For $X\subseteq V(D)$, let
\[
  e_D(X)=|A(D[X])|,
\]
and omit the subscript when the ambient digraph is clear. For disjoint sets $X,Z$, let $e_D(X,Z)$ be the number of arcs directed from $X$ to $Z$, and put
\[
  d_X^+(v)=|N_D^+(v)\cap X|.
\]

We use the clone-symmetrization argument of I\v{l}kovi\v{c}~\cite{Ilkovic}. The following formulation records the labelled out-degrees needed to recover the original extremal digraph.

\begin{lemma}\label{lem:reg5-symmetrization}
Let $F$ be a tournament, and let $D$ maximize $\Phi$ among all
$F$-free digraphs of its order. Define a relation $\sim$ on $V(D)$ by
\[
  x\sim y
  \quad\Longleftrightarrow\quad
  x=y\ \text{or $x$ and $y$ are nonadjacent}.
\]
Then $\sim$ is an equivalence relation whose equivalence classes
$V_1,\ldots,V_s$ are independent. Choose $v_i\in V_i$, let
\[
  Q=D[\{v_1,\ldots,v_s\}],
\]
and let $\widehat D$ be the independent blow-up of $Q$ on the
classes $V_i$. Then $Q$ is semicomplete and $F$-free, $\widehat D$ is $F$-free, and
\[
  d_{\widehat D}^+(x)=d_D^+(x)
  \qquad(x\in V(D)).
\]
\end{lemma}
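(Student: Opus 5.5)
The plan is the Zykov-type clone symmetrization. The key fact is that in an extremal $D$, two nonadjacent vertices must have identical out-neighbourhoods, or at least out-neighbourhoods that can be swapped without loss. The whole argument comes down to showing that nonadjacency is transitive.

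\textbf{Step 1 (cloning preserves $F$-freeness).} Let $x,y$ be nonadjacent. Let $D_{x\to y}$ be obtained by deleting all arcs at $y$ and giving $y$ exactly the in- and out-neighbours of $x$, with $x,y$ still nonadjacent. This digraph is $F$-free. Indeed, a copy of $F$ cannot use both $x$ and $y$, since they are nonadjacent and $F$ is a tournament. If it uses $y$ but not $x$, replacing $y$ by $x$ gives a copy of $F$ in $D$.

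\textbf{Step 2 (out-degrees are determined).} Cloning changes $y$'s out-degree to $d^+(x)$. It also changes the out-degree of every vertex $z\ne x,y$ by $\ind[zx\in A]-\ind[zy\in A]$. Because $\Phi$ is not linear, comparing $D_{x\to y}$ with $D_{y\to x}$ requires care. I would follow Ilkovi\v{c}'s approach and apply the symmetrization in a form suited to the squared sum: use a convexity or averaging argument to show that some clone does not decrease $\Phi$.

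\textbf{Step 3 (transitivity).} This is the main obstacle. Suppose $x\sim y$ and $y\sim z$ with $x,z$ adjacent, say $xz\in A$. The aim is to derive a contradiction with maximality, or to show this configuration is impossible. The natural move is to replace both $x$ and $z$ by clones of $y$. This destroys the arc $xz$ and loses the arc-contribution, so the change in $\Phi$ has to be controlled.

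Once $\sim$ is shown to be an equivalence relation, the rest follows directly:

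\begin{itemize}
\item Each class $V_i$ is independent by definition.
\item Distinct classes contain adjacent representatives, so $Q$ is semicomplete.
\item $Q$ is an induced subdigraph of $D$, hence $F$-free.
\item $\widehat D$ is $F$-free because a blow-up with independent classes contains the tournament $F$ only if $Q$ does.
\end{itemize}

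It remains to show $d^+_{\widehat D}(x)=d^+_D(x)$. For this I would show that, in an extremal $D$, any two vertices $x,y$ in the same class have the same adjacency pattern to every vertex outside the class. Then $D=\widehat D$ as labelled digraphs, which gives the degree equality.

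I would prove this by contradiction. If $z\notin V_i$ sees $x$ and $y$ differently, I would show that some clone operation from Step 1 strictly increases $\Phi$, or keeps $\Phi$ unchanged while strictly increasing a secondary potential such as the number of arcs. Under that choice of $D$ this contradicts maximality.
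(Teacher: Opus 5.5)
There is a genuine gap. Your Step 1, and your deductions for $Q$ and $\widehat D$ once $\sim$ is known to be an equivalence relation, are fine. The core of the lemma, however, is missing.

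\textbf{Step 2 has the wrong target.} It stops at ``show that some clone does not decrease $\Phi$'', which is unexecuted and not what is needed. What is needed is that \emph{neither} clone increases $\Phi$, which is just maximality plus Step 1, combined with an exact identity. Put $d^+=d_D^+$ and $\varepsilon_z=\ind_{zx\in A(D)}-\ind_{zy\in A(D)}$ for $z\ne x,y$. The two clones change $\Phi$ by $d^+(x)^2-d^+(y)^2+\sum_z(2d^+(z)\varepsilon_z+\varepsilon_z^2)$ and by $d^+(y)^2-d^+(x)^2+\sum_z(-2d^+(z)\varepsilon_z+\varepsilon_z^2)$. Hence
\[
  \Phi(D_{x\to y})+\Phi(D_{y\to x})-2\Phi(D)=2\sum_{z\ne x,y}\varepsilon_z^2 .
\]
The left side is at most $0$, so every $\varepsilon_z=0$. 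The two individual changes are then $\pm\bigl(d^+(x)^2-d^+(y)^2\bigr)$, and both are nonpositive.

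So nonadjacent vertices have the same \emph{in}-neighbourhood and the same out-degree. Your ``key fact'' about identical out-neighbourhoods is on the wrong side: the out-neighbourhoods of $x,y$ enter $\Phi$ only through $d^+(x)$ and $d^+(y)$.

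\textbf{Step 3 then needs no new idea.} If $x,y$ and $y,z$ are nonadjacent, an arc $x\to z$ would give $x\in N^-(z)=N^-(y)$. An arc $z\to x$ would give $z\in N^-(x)=N^-(y)$. Both contradict nonadjacency. As written, your double-cloning idea for Step 3 is never carried out, so transitivity is unproved.

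\textbf{The last step also fails as proposed.} You aim for $D=\widehat D$ by choosing, among maximizers, one that also maximizes a secondary potential. That would give the conclusion only for a specially chosen maximizer. The lemma is stated for every maximizer, and it is applied to an arbitrary extremal $D$ in the uniqueness part of Theorem~\ref{thm:reg5-main}.

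Your tie-breaker also cannot work. Once in-neighbourhoods and out-degrees agree within a class, cloning one vertex of the class onto another changes no out-degree and no arc count. So neither $\Phi$ nor the number of arcs can force equal out-neighbourhoods. The lemma claims only equality of labelled out-degrees, and that follows directly. All vertices of $V_j$ share an in-neighbourhood, so $v_i$ sends arcs to all of $V_j$ or to none. This gives $d_D^+(v_i)=\sum_{j:\,v_i\to v_j}|V_j|=d_{\widehat D}^+(x)$ for $x\in V_i$. Finally, $d_D^+(x)=d_D^+(v_i)$ by the out-degree equality for nonadjacent vertices.
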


\begin{proof}
For nonadjacent vertices $x,y$, let $D_{x\to y}$ be obtained from
$D$ by making $x$ a nonadjacent clone of $y$: for every
$z\in V(D)\setminus\{x,y\}$,
\[
  xz\in A(D_{x\to y})
  \quad\Longleftrightarrow\quad
  yz\in A(D),
\]
and
\[
  zx\in A(D_{x\to y})
  \quad\Longleftrightarrow\quad
  zy\in A(D),
\]
while $x$ and $y$ remain nonadjacent. All other arcs are unchanged.
This preserves $F$-freeness: a tournament copy uses at most one of $x,y$, and a copy using the new $x$ can be transferred to the original $y$. Put
\[
  \varepsilon_z=\ind_{zy\in A(D)}-\ind_{zx\in A(D)}
  \qquad(z\ne x,y).
\]
The two-direction cloning calculation gives
\[
  0\ge\Phi(D_{x\to y})+\Phi(D_{y\to x})-2\Phi(D)
   =2\sum_{z\ne x,y}\varepsilon_z^2\ge0.
\]
Thus all $\varepsilon_z$ vanish. The individual changes then become
$d_D^+(y)^2-d_D^+(x)^2$ and its negative; both are nonpositive. Hence
\[
  N_D^-(x)=N_D^-(y),\qquad d_D^+(x)=d_D^+(y).
\]
If $x,y$ and $y,z$ are nonadjacent, an arc $x\to z$ would force $x\to y$, and an arc $z\to x$ would force $z\to y$. This proves the transitivity of $\sim$, and hence $\sim$ is an equivalence relation.

The resulting classes are independent and distinct classes are adjacent, so $Q$ is semicomplete; it is $F$-free as an induced subdigraph of $D$. All vertices of a target class $V_j$ have the same in-neighborhood. Therefore, for $x\in V_i$,
\[
  d_{\widehat D}^+(x)
  =\sum_{j:\,v_i\to v_j}|V_j|
  =d_D^+(v_i)=d_D^+(x).
\]
The projection property for tournament copies shows that $\widehat D$ is $F$-free.
\end{proof}

Let $R_4$ be the tournament on vertex set $\{1,2,3,4\}$ with arc set
\[
  \{12,23,34,41,13,24\},
\]
where $ij$ denotes the arc $i\to j$. It is clear that,
\[
  \Reg-v\cong R_4
  \qquad(v\in V(\Reg)).
\]
In a semicomplete digraph, any four vertices containing a directed $4$-cycle contain a copy of $R_4$: retain the four cycle arcs and choose one available direction on each diagonal.

For a semicomplete digraph $Q$, let $M(Q)$ be its undirected \emph{digon graph}: $uv\in E(M(Q))$ if and only if $u\leftrightarrow v$ in $Q$.

\begin{lemma}\label{lem:reg5-local}
If $Q$ is a $\Reg$-free semicomplete digraph, then $M(Q)$ is $K_{2,3}$-free. Moreover, for every vertex $v$, the quotient
\[
  Q[N_{M(Q)}(v)]
\]
is $R_4$-free.
\end{lemma}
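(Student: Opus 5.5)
Both parts reduce to exhibiting a copy of $\Reg$ inside $Q$ from a forbidden local configuration. The key tool is the observation just made: any four vertices of a semicomplete digraph that carry a directed $4$-cycle contain a copy of $R_4$. This holds because $R_4$ is exactly a Hamiltonian $4$-cycle $1\to2\to3\to4\to1$ with diagonals $13,24$, and both diagonal directions occur up to relabelling. Since $\Reg-v\cong R_4$, the model for both parts is the same. We find a copy of $R_4$ on four vertices, then a fifth vertex joined to them by digons. Such a vertex can play the role of the deleted vertex $v$ of $\Reg$, because a digon supplies whichever direction is required.

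\textbf{Second part.} Suppose $Q[N_{M(Q)}(v)]$ contained $R_4$ on vertices $a,b,c,d$. Then $v$ is joined to each of $a,b,c,d$ by a digon. In $\Reg$ on $\mathbb Z_5$, the vertex $0$ has out-neighbours $1,2$ and in-neighbours $3,4$, and $\Reg-0\cong R_4$. So we map $0\mapsto v$ and map $\{1,2,3,4\}$ to $a,b,c,d$ by the isomorphism. Every arc among the images is present in the $R_4$ copy. Every arc between $v$ and the images is present because each such pair is a digon. Hence $\Reg\subseteq Q$, a contradiction.

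\textbf{First part.} Suppose $M(Q)$ contains $K_{2,3}$ with parts $\{x,y\}$ and $\{a,b,c\}$, all six pairs being digons. We aim to embed $\Reg$ so that $x,y$ are nonadjacent in $\Reg$'s structure (they are adjacent in some direction in $Q$) and the arcs among $a,b,c$ are whatever $Q$ provides. Only the pairs $xy$, $ab$, $bc$, $ca$ are constrained; we fix one available arc on each, giving a tournament on $\{a,b,c\}$ and one orientation of $xy$.

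The task is then to find a labelling of $\mathbb Z_5$ under which two vertices $i,j$ of $\Reg$ correspond to $x,y$, the remaining three correspond to $a,b,c$, and $\Reg$'s arcs inside $\{x,y\}$ and inside $\{a,b,c\}$ agree with the chosen ones. Cross pairs are digons and impose no constraint. I expect this to be the main step. It is a finite check, organised by cases.

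\textbf{Cases on the triple.} The chosen arcs on $\{a,b,c\}$ form either a transitive triple or a directed triangle.
\begin{itemize}
\item \emph{Transitive triple.} Take $\{i,j\}=\{0,2\}$ in $\Reg$. The remaining vertices $1,3,4$ have arcs $1\to3$, $3\to4$ and $4\to1$, since $1-4\equiv2$. This is a directed triangle, so the pair $\{0,2\}$ is the wrong choice here. Instead take $\{i,j\}=\{0,1\}$. The remaining vertices $2,3,4$ have arcs $2\to3$, $3\to4$ and $2\to4$, a transitive triple. The arc $0\to1$ fixes the direction of $xy$, and by symmetry, relabelling via $i\mapsto -i$ (an anti-automorphism combined with rotation if needed), we can match either direction.
\item \emph{Directed triangle.} Take $\{i,j\}=\{0,2\}$; the triple $\{1,3,4\}$ is cyclic as computed above.
\end{itemize}

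\textbf{Direction of $xy$.} In each case we must match the orientation of $xy$ with the arc between $i$ and $j$. This is possible after choosing which of $x,y$ maps to $i$. Within the triple, transitive triples and cyclic triangles are each unique up to isomorphism, so the labelling of $a,b,c$ can also be matched. Thus $Q\supseteq\Reg$, a contradiction.
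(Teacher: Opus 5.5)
Your proof is correct and is the paper's argument. $\Reg$ splits into a transitive or a cyclic triple plus one arc, with all cross pairs left free for the digons. You use $\{2,3,4\}\cup\{0,1\}$ and $\{1,3,4\}\cup\{0,2\}$, where the paper uses rotations of these, and $\Reg-0\cong R_4$ with the digons at $v$ playing vertex $0$. The detour through $i\mapsto -i$ is unnecessary, and calling $x,y$ ``nonadjacent'' in $\Reg$ is a slip. Simply choosing which of $x,y$ maps to $i$ already matches the chosen orientation of $xy$, as your last paragraph says.
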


\begin{proof}
In $\Reg$, the vertices $\{0,1,2\}$ induce a transitive triangle, while $\{0,1,3\}$ induce a cyclic triangle; in each case the other two vertices are joined by an arc. Thus $\Reg$ can be partitioned either into a transitive triangle and an arc or into a cyclic triangle and an arc.

If $M(Q)$ contained a $K_{2,3}$, choose one available direction on every pair on the three-vertex side. The resulting tournament is either transitive or cyclic. Choose an arc on the two-vertex side. Every cross-pair is a digon, so all cross-arcs required by the corresponding decomposition of $\Reg$ can be selected, a contradiction. In particular, any two quotient vertices have at most two common digon-neighbors.

If the digon-neighborhood of some vertex $v$ contained $R_4$, then the digons between $v$ and those four vertices would complete a copy of $\Reg$, again a contradiction.
\end{proof}

By Theorem~\ref{thm:Brown-Harary}, every $t$-vertex $R_4$-free digraph $H$ satisfies
\[
  e(H)\le2\ex(t,K_4)\le\frac23t^2.
\]
For a more restricted quotient, we need the following improvement.

\begin{lemma}\label{lem:local-arc-bound}
Let $H$ be a $t$-vertex independent blow-up whose semicomplete quotient $Q_H$ is $R_4$-free and contains no $\dd\!_3$. Then
\begin{equation*}
  e(H)\le\frac47t^2.
\end{equation*}
\end{lemma}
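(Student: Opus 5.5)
We would pass to a weighted quotient problem and solve it as a Lagrangian-type optimisation. Let $Q=Q_H$ have vertex set $U$ and class sizes $x_u=|V_u|\ge1$ with $\sum_u x_u=t$. Every pair of quotient vertices is joined by at least one arc, and by exactly two when it is a digon. Hence
\[
  e(H)=\sum_{\{u,v\}\subseteq U}x_ux_v+\sum_{uv\in E(M(Q))}x_ux_v
      =\tfrac12\Bigl(t^2-\sum_u x_u^2\Bigr)+\sum_{uv\in E(M(Q))}x_ux_v .
\]
So it suffices to prove that the quadratic form $f_Q(x)=\sum_{u<v}c_{uv}x_ux_v$, with $c_{uv}\in\{1,2\}$ and $c_{uv}=2$ exactly on digons, satisfies $f_Q(x)\le\frac47$ on the simplex $\sum x_u=1$.

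The two hypotheses translate as follows. Absence of $\dd\!_3$ means $M(Q)$ is triangle-free. Being $R_4$-free means, by the remark before Lemma~\ref{lem:reg5-local}, that no four quotient vertices carry a directed $4$-cycle using available arcs. The bound is sharp for $Q$ on $\mathbb Z_7$ with $M(Q)=C_7$ and the remaining pairs oriented circulantly, with equal weights: this gives $(21+7)/49=4/7$. That example guides the analysis.

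The steps, in order, are these.
\begin{enumerate}
\item Choose an optimal weight vector $x$ on the simplex whose support $S$ is minimal. Restricting $Q$ to $S$ preserves both hypotheses. The KKT conditions give, for every $u\in S$,
\[
  \sum_{v\ne u}c_{uv}x_v=2\lambda,
  \qquad\text{that is,}\qquad
  1-x_u+x(N_M(u))=2\lambda=2f_Q(x).
\]
\item Rule out redundant vertices via minimality of the support. If two support vertices $u,v$ have the same digon pattern toward the rest of $S$ and $uv$ is not a digon, then on the segment $x_u+x_v=s$ the form equals a linear term plus $x_ux_v$. Shifting weight along this segment either strictly increases $f$ or keeps it constant while reducing the support, so such twins can be merged.
\item Exploit the local structure. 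For every $u$, the set $N_M(u)$ is $M$-independent, so $Q[N_M(u)]$ is a tournament. It must avoid $R_4$, and in addition no directed $4$-cycle may pass through $u$ using its digons. Taking a directed $4$-cycle $u\,a\,b\,c$ with $a,c\in N_M(u)$ shows that for $a,c\in N_M(u)$ there can be no vertex $b$ with $a\to b\to c$ whenever $c\to a$ is forced. This severely restricts the common out- and in-neighbourhoods of digon neighbours.
\item Sum the KKT identity weighted by $x_u$. This gives
\[
  2f=1-\sum_u x_u^2+2\sum_{uv\in M}x_ux_v,
\]
which is consistent with the first display. The aim is then to show
\[
  \sum_{uv\in M}x_ux_v\le\frac{1}{14}+\frac12\sum_u x_u^2,
\]
with equality for the $C_7$ configuration.
\end{enumerate}

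The main obstacle is the last step: proving the weighted inequality for digon edges using only triangle-freeness of $M$ together with the no-directed-$C_4$ condition. Triangle-freeness alone, via Motzkin--Straus, only gives $\sum_{uv\in M}x_ux_v\le\frac14$, which is too weak. The directed condition is what forces $M$ to look like an odd cycle rather than a complete bipartite graph. Indeed, Lemma~\ref{lem:reg5-local}-style reasoning shows that if $M\supseteq C_4$ or $M$ has a vertex of $M$-degree at least $3$, then the non-digon pairs must be oriented so as to create a directed $4$-cycle.

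So the first thing to try is a case analysis on the maximum $M$-degree within the support.
\begin{itemize}
\item Show that $\Delta(M[S])\le2$, so that $M[S]$ is a union of paths and cycles.
\item Show that $4$-cycles and even cycles are impossible.
\item Optimise $f$ over disjoint unions of paths and odd cycles of length at least $5$. Here the KKT equalities make the weights uniform, and the value is at most $\frac12\bigl(1-\frac1k\bigr)+\frac1k=\frac12+\frac1{2k}$ for a $k$-cycle on equal weights.
\end{itemize}
The value $\frac12+\frac1{2k}$ is at most $\frac47$ precisely when $k\ge7$. Hence the $5$-cycle must be shown incompatible with $R_4$-freeness by direct inspection of its orientations. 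This $C_5$ exclusion, together with the degree bound $\Delta(M[S])\le2$, is where we expect the real work to lie.
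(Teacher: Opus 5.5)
\textbf{There is a genuine gap, and it begins with your sharpness example.} The circulant on $\mathbb Z_7$ with $M(Q)=C_7$ is not $R_4$-free. Take any digon path $a\leftrightarrow b\leftrightarrow c\leftrightarrow d$ in a semicomplete digraph. The pair $\{a,d\}$ carries at least one arc. If $d\to a$, then $a\to b\to c\to d\to a$ is a directed $4$-cycle. If $a\to d$, then $a\to d\to c\to b\to a$ is one. By the remark preceding Lemma~\ref{lem:reg5-local}, either case yields $R_4$.

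So $M(Q)$ contains no path on four vertices at all. This already excludes every cycle of length at least $4$: the $C_5$ you single out as the main difficulty, and the $C_7$ you take as the extremal model. Your plan never uses this fact. Without it, the step ``optimise over disjoint unions of paths and odd cycles'' is not merely incomplete but false.
\begin{itemize}
\item On a path component the KKT weights are not uniform.
\item On a $P_4$ component with weights $(p,\tfrac12-p,\tfrac12-p,p)$ one gets $f_Q=\tfrac12+p-3p^2$. At $p=\tfrac16$ this equals $\tfrac7{12}>\tfrac47$.
\end{itemize}
Hence no argument that only proves $\Delta(M)\le2$ and excludes even cycles and $C_5$ can reach the bound $\tfrac47$.

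\textbf{The missing idea is exactly this exclusion of $P_4$.} Combine it with three facts:
\begin{itemize}
\item triangle-freeness of $M(Q)$, since there is no bidirected $K_3$;
\item $P_4$-freeness, shown above;
\item $K_{1,3}$-freeness, from your own degree-three argument: a Hamiltonian path $a\to b\to c$ among the leaves of a digon star centred at $v$ closes to $v\to a\to b\to c\to v$.
\end{itemize}
Together these force every component of $M(Q)$ to be $K_1$, $K_2$ or $P_3$.

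The true extremal configuration is a single $P_3$ with weight $\tfrac37$ at the centre and $\tfrac27$ at each end. For a $P_3$ component of total size $u$, centre class $a$ and end classes $p,q$,
\[
2a(p+q)+pq\le 2a(u-a)+\tfrac{(u-a)^2}{4}=\tfrac47u^2-\tfrac{(7a-3u)^2}{28}.
\]
Pairs in different components carry exactly one arc. Therefore
\[
e(H)\le\tfrac47\sum_iu_i^2+\sum_{i<j}u_iu_j=\tfrac12t^2+\tfrac1{14}\sum_iu_i^2\le\tfrac47t^2.
\]

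Once the component structure is known, the Lagrangian and support-minimality machinery is unnecessary. Your reduction to the quadratic form $f_Q$ is fine. So is your target inequality $\sum_{uv\in M}x_ux_v\le\tfrac1{14}+\tfrac12\sum_ux_u^2$. But equality there is attained by the $P_3$, not by the $C_7$.
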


\begin{proof}
The case $t=0$ is immediate. The digon graph $M(Q_H)$ has neither a four-vertex path nor a $K_{1,3}$. Indeed, a digon path $a-b-c-d$, together with either available direction between its endpoints, gives a directed four-cycle. For a digon star centered at $v$, choose a directed path $a\to b\to c$ among the three leaves; then $v\to a\to b\to c\to v$ is such a cycle. Either cycle yields $R_4$.

Consequently, each component of $M(Q_H)$ is $K_1$, $K_2$ or $P_3$, since $K_3$ is excluded. A component of total blow-up size $u$ contributes at most $u^2/2$ arcs in the first two cases. For a $P_3$ component, write $a$ for the center-class size and $p,q$ for the end-class sizes, with $p+q=u-a$. Its internal arc count satisfies
\[
  2a(p+q)+pq
  \le2a(u-a)+\frac{(u-a)^2}{4}
  =\frac47u^2-\frac{(7a-3u)^2}{28}
  \le\frac47u^2.
\]
Between distinct digon components, every pair of quotient vertices has exactly one arc. If the component sizes are $u_1,\ldots,u_k$, then
\[
  e(H)\le\frac47\sum_i u_i^2+\sum_{i<j}u_i u_j
  =\frac12t^2+\frac1{14}\sum_i u_i^2
  \le\frac47t^2.
\]
\end{proof}

For a nonnegative integer $n=4q+\rho$, where $0\le\rho<4$, define
\[\begin{aligned}
  p_4(n)&=\rho(q+1)^2+(4-\rho)q^2,\\
  e_4(n)&=n^2-p_4(n),\\
  B_4(n)&=\rho(q+1)(n-q-1)^2+(4-\rho)q(n-q)^2.
\end{aligned}\]
These are, respectively, the sum of squares of the part sizes of $\dT\!_4(n)$, the number of arcs in $\dT\!_4(n)$, and $\Phi(\dT\!_4(n))$. We set all three quantities equal
to zero at $n=0$.

\section{Quotients without a bidirected \texorpdfstring{$K_4$}{K4}}\label{sec:no-bidirected-k4}

Throughout this section, $D$ is an $n$-vertex independent blow-up whose semicomplete quotient $Q$ is $\Reg$-free and contains no $\dd\!_4$. We will prove that $\Phi(D)<B_4(n)$ whenever $n\ge5$.

For a vertex $x$, write
\[
  \sigma_D(x)=d_D^+(x)+d_D^-(x).
\]

\begin{lemma}\label{lem:low-total-degree}
Under the assumptions of this section, if $n\ge1$, then some vertex $x$ satisfies
\begin{equation*}
  \sigma_D(x)<\frac{10}{7}n.
\end{equation*}
\end{lemma}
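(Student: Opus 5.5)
The plan is an averaging argument over the blow-up structure, combining the uniform bound coming from a maximum weighted digon degree with the local information of Lemmas~\ref{lem:reg5-local} and~\ref{lem:local-arc-bound}. For a vertex $x\in V_u$ set
\[
  w(x)=\sum_{u'\,:\,u\leftrightarrow u'}|V_{u'}|,
\]
the weighted digon degree of $x$. Every pair of distinct quotient vertices carries one or two arcs, so
\[
  \sigma_D(x)=n-|V_u|+w(x)\le n+w(x).
\]
Fix a quotient vertex $v$ maximizing $t:=w(x)$ for $x\in V_v$. Let $N$ be the union of the classes $V_{u}$ with $u\in N_{M(Q)}(v)$, so $|N|=t$, and note $\sigma_D(x)\le n+t$ for every vertex. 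If $t<\frac37 n$ we are done immediately. So the work is in the regime $t\ge\frac37 n$.

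In that regime I would average $\sigma_D$ over $N$. Since $Q$ has no $\dd\!_4$, the quotient $Q[N_{M(Q)}(v)]$ has no $\dd\!_3$. It is also $R_4$-free by Lemma~\ref{lem:reg5-local}. Lemma~\ref{lem:local-arc-bound} therefore gives $e(D[N])\le\frac47t^2$, and its proof shows that the digon graph inside $N$ has components $K_1$, $K_2$ or $P_3$. Write
\[
  \sum_{x\in N}\sigma_D(x)=2e(D[N])+\sum_{x\in N}\sum_{y\notin N}\bigl(\ind_{xy}+\ind_{yx}\bigr).
\]
The crude bound $2$ per cross pair gives an average of at most $2n-\frac67t$. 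This is below $\frac{10}7n$ only once $t>\frac23n$.

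To close the gap $\frac37n\le t\le\frac23n$, I will bound the digon pairs between $N$ and $R:=V(D)\setminus(N\cup V_v)$. By the $K_{2,3}$-freeness in Lemma~\ref{lem:reg5-local}, every quotient vertex $y\ne v$ outside $N_{M(Q)}(v)$ has at most two digon-neighbours among the classes of $N$. Hence a vertex of $R$ contributes at most two digon classes to $N$, and its number of arcs to and from $N$ is at most $t+|V_{a}|+|V_{b}|$ for two classes $a,b$ of $N$. Also each vertex of $N$ has weighted digon degree at most $t$ by maximality. I would combine these two constraints in a weighted sum over $N\cup V_v$ (or over $N$ plus a suitable multiple of $V_v$), aiming to show
\[
  \sum_{x\in N\cup V_v}\sigma_D(x)<\tfrac{10}7n\,(t+|V_v|).
\]
The exact weights are to be tuned. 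The main obstacle is precisely this mid-range optimisation: one must show that the $P_3$-component structure inside $N$, the ``at most two classes'' constraint from $R$, and the maximality of $t$ cannot all be nearly tight simultaneously. I would try first to reduce it to a small quadratic program in the class sizes of one $P_3$ component and the sizes $t$, $|V_v|$, $n$, in the same spirit as the computation $\frac47u^2-\frac{(7a-3u)^2}{28}$ in Lemma~\ref{lem:local-arc-bound}. Strictness comes from the strict inequality in the case analysis, or from $n\ge1$ forcing a nonempty class.
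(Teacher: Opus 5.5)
Your proposal has a genuine gap. The only nontrivial regime, $\frac37 n\le t\le\frac23 n$, is left open (``the exact weights are to be tuned''), so no vertex of small total degree is actually produced. The ingredients you list also do not obviously close it. The $K_{2,3}$ constraint only says that each vertex of $R$ has digons to at most two classes $V_a,V_b$ of $N$. Its strength depends entirely on an upper bound for $|V_a|+|V_b|$, and your choice of $v$ (maximizing the weighted digon degree) gives none. If $N$ consists of one or two large classes, the bound $t+|V_a|+|V_b|$ is no better than the crude count $2t$. Maximality of $t$ bounds the digon weight of each class of $N$, not its size, and you do not show how the two constraints combine. You also discard the $-|V_u|$ term at the outset ($\sigma_D(x)\le n+w(x)$), which throws away exactly the parameter that makes the estimate close.

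The missing idea is to pivot on a \emph{largest} class. Write $a_u=|V_u|$ and $\beta_u$ for the total size of the classes bidirected to $V_u$, so $\sigma_D(x)=n-a_u+\beta_u$ exactly for $x\in V_u$. Let $V_v$ be a largest class, $a=|V_v|$, $B$ the union of the classes bidirected to $v$, $b=|B|=\beta_v$ and $c=n-a-b$.

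If $b<a+\frac37n$, the vertices of $V_v$ already work. Otherwise $a+\frac37n\le b\le n-a$, whence $a\le\frac27 n$. Every class in $B$ has size at most $a$, so each vertex outside $V_v\cup B$ has at most $2a$ digon-neighbours in $B$. Lemma~\ref{lem:local-arc-bound} gives $2e_2(B)\le\frac17b^2+\sum_{V_u\subseteq B}a_u^2$, where $e_2(B)$ counts digon pairs inside $B$. Hence
\[
\sum_{x\in B}\sigma_D(x)=nb-\sum_{V_u\subseteq B}a_u^2+\sum_{V_u\subseteq B}a_u\beta_u\le nb+\tfrac17b^2+ab+2ac,
\]
since the class-square sums cancel. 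The right-hand side is less than $\frac{10}{7}nb$ exactly when
\[
F(b)=b^2-(7a+3n)b+14a(n-a)<0 .
\]
$F$ is convex and negative at both endpoints $b=a+\frac37n$ and $b=n-a$; the latter uses $a\le\frac27n$. The paper phrases this as a contradiction, assuming $\sigma_D\ge\frac{10}7n$ everywhere, but it is the same computation. No analysis of individual $P_3$ components is needed beyond the global bound $e(B)\le\frac47b^2$.
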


\begin{proof}
Suppose instead that every vertex has total degree at least $10n/7$. Let $V_u$ be a quotient class of size $a_u$, and let $\beta_u$ be the total size of all other classes bidirected to $V_u$. Semicompleteness supplies at least one arc between a vertex of $V_u$ and each of the $n-a_u$ vertices outside its class, while every digon contributes one additional incident arc. Hence
\[
  \sigma_D(x)=n-a_u+\beta_u
  \qquad(x\in V_u),
\]
and therefore
\begin{equation}\label{eq:beta-lower}
  \beta_u\ge a_u+\frac37n
\end{equation}
for every quotient vertex $u$.

Choose a largest class $V_v$ and put
\[
  a=|V_v|=\max_u a_u.
\]
Let $B$ be the union of the classes that are bidirected to $v$, and put
\[
  C=V(D)\setminus(V_v\cup B).
\]
Write
\[
  b=|B|=\beta_v,
  \qquad
  c=|C|=n-a-b.
\]
From \eqref{eq:beta-lower} and $b\le n-a$,
\begin{equation*}
  a+\frac37n\le b\le n-a,
  \qquad
  0<a\le\frac27n.
\end{equation*}

By Lemma~\ref{lem:reg5-local}, the quotient induced by $B$ is $R_4$-free. It also contains no bidirected $K_3$, since adding $v$ would produce a bidirected $K_4$. Lemma~\ref{lem:local-arc-bound} therefore gives
\begin{equation*}
  e(B)\le\frac47b^2.
\end{equation*}
Let $e_2(B)$ be the number of unordered pairs of actual vertices in $B$ that form digons. Every pair from distinct quotient classes contributes one arc, and a digon contributes one additional arc, so
\[
  e(B)
  =\frac12\left(b^2-\sum_{u:\,V_u\subseteq B}a_u^2\right)+e_2(B).
\]
Together with the preceding bound on $e(B)$, this yields
\begin{equation*}
  2e_2(B)
  \le\frac17b^2+\sum_{u:\,V_u\subseteq B}a_u^2.
\end{equation*}

Now count the total number of digon-neighbors of actual vertices in $B$:
\[
  I_B=\sum_{u:\,V_u\subseteq B}a_u\beta_u.
\]
By \eqref{eq:beta-lower},
\begin{equation*}
  I_B
  \ge\sum_{u:\,V_u\subseteq B}a_u^2+\frac37nb.
\end{equation*}
On the other hand, digons whose other endpoint is also in $B$ contribute $2e_2(B)$, and those whose other endpoint is in $V_v$ contribute $ab$. Fix a quotient vertex $w$ represented in $C$. Its digon-neighbor classes in $B$ are common digon-neighbors of $v$ and $w$. By Lemma~\ref{lem:reg5-local}, there are at most two such classes, each of size at most $a$. Thus every actual vertex in $C$ has at most $2a$ digon-neighbors in $B$, and
\begin{equation*}
  I_B\le2e_2(B)+ab+2ac.
\end{equation*}
Combining the preceding three estimates and cancelling the class-square sum gives
\begin{equation*}
  \frac37nb\le\frac17b^2+ab+2ac.
\end{equation*}
Substituting $c=n-a-b$ and multiplying by seven, we obtain
\begin{equation*}
  F(b):=b^2-(7a+3n)b+14a(n-a)\ge0.
\end{equation*}

However, $a+\frac37n\le b\le n-a$ and $0<a\le\frac27n$, while $F$ is strictly negative at both endpoints:
\begin{align*}
  F\left(a+\frac37n\right)
  &=-20\left(a-\frac{31}{140}n\right)^2-\frac{17}{140}n^2<0,\\
  F(n-a)
  &=(n-a)(6a-2n)
   \le-\frac27n(n-a)<0.
\end{align*}
The second inequality uses $a\le2n/7$. It is clear that $F(b)<0$, contradicting $F(b)\ge0$.
\end{proof}

\begin{lemma}\label{lem:finite-blowup-arc}
Under the assumptions of this section,
\begin{equation*}
  e(D)\le\frac57n^2.
\end{equation*}
For every $W\subseteq V(D)$, the corresponding bound $e(D[W])\le\frac57|W|^2$ also holds.
\end{lemma}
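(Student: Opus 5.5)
Induction on $n$, using Lemma~\ref{lem:low-total-degree} to delete a low-degree vertex. The second assertion is the same statement for an induced subdigraph. Indeed, $D[W]$ is again an independent blow-up, of the induced quotient on the classes meeting $W$ with empty classes removed. That quotient is an induced subdigraph of $Q$, so it is semicomplete, $\Reg$-free and contains no $\dd\!_4$. Hence it suffices to prove $e(D)\le\frac57n^2$ for every $D$ satisfying the standing assumptions of this section, for all $n\ge0$ simultaneously.

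We argue by induction on $n$, the case $n=0$ being trivial. For $n\ge1$, Lemma~\ref{lem:low-total-degree} supplies a vertex $x$ with $\sigma_D(x)<\frac{10}{7}n$. Deleting $x$ removes exactly $\sigma_D(x)$ arcs. The digraph $D-x$ is an independent blow-up of $Q$, or of $Q$ minus the vertex of $x$'s class if that class was a singleton. It therefore still satisfies the assumptions, so by induction
\[
  e(D)=e(D-x)+\sigma_D(x)<\frac57(n-1)^2+\frac{10}{7}n
  =\frac57n^2+\frac57 .
\]

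The only delicate step is closing the gap of $\frac57$ produced by this crude estimate. Since $e(D)$ and $\sigma_D(x)$ are integers, the strict inequality gives
\[
  \sigma_D(x)\le\left\lceil\tfrac{10}{7}n\right\rceil-1.
\]
Writing $e(D-x)\le\lfloor\frac57(n-1)^2\rfloor$, we must check
\[
  \left\lfloor\tfrac57(n-1)^2\right\rfloor+\left\lceil\tfrac{10n}{7}\right\rceil-1\le\tfrac57n^2 .
\]
Equivalently, writing $\{y\}=y-\lfloor y\rfloor$ for the fractional part, we need
\[
  \left\{\tfrac{10n}{7}\right\}'+\tfrac57\le\left\{\tfrac57(n-1)^2\right\},
\]
where $\{y\}'=\lceil y\rceil-y$ (the distance up to the next integer). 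This can fail for some residues of $n$ modulo $7$, so the plan is to strengthen the induction hypothesis to an integer-valued form, namely $e(D)\le\lfloor\frac57n^2\rfloor$. Then
\[
  e(D)\le\left\lfloor\tfrac57(n-1)^2\right\rfloor+\left\lceil\tfrac{10n}{7}\right\rceil-1,
\]
and we verify this is at most $\lfloor\frac57n^2\rfloor$ by checking the seven residues of $n$ modulo $7$.

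If some residue fails, the fallback is to sharpen Lemma~\ref{lem:low-total-degree} slightly. Its proof already yields strict negativity of $F$ with a margin of order $n^2$ at both endpoints, so it should give $\sigma_D(x)\le\frac{10}{7}n-\frac57$, or at least $\sigma_D(x)\le\frac{10n-5}{7}$. With that bound the plain induction closes exactly, since $\frac57(n-1)^2+\frac{10n-5}{7}=\frac57n^2$. I expect this rounding or margin issue to be the main obstacle. The structural content is entirely contained in Lemma~\ref{lem:low-total-degree}, and the heredity of the hypotheses under vertex deletion is routine.
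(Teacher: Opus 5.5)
The reduction to the full digraph and the heredity of the hypotheses under deletion are correct and match the paper. The closing step fails, though, and that step is the nontrivial part.

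The residue check does not go through. For $n=7k+1$ your bound is $\lfloor\frac57(n-1)^2\rfloor+\lceil\frac{10n}{7}\rceil-1=35k^2+10k+1$, while $\lfloor\frac57n^2\rfloor=35k^2+10k$. For $n=7k+5$ you get $35k^2+50k+18$ against $35k^2+50k+17$; already at $n=5$ this is $11+7=18>17$. Replacing $\frac57n^2$ by $\lfloor\frac57n^2\rfloor$ is no strengthening, since $e(D)$ is an integer. The loss is also not a rounding artefact. What deletion of low-degree vertices honestly proves is $e(D)<\frac57n(n+1)$, whose excess over $\frac57n^2$ grows linearly in $n$.

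Your fallback does not repair this. The sharpened bound $\sigma_D(x)\le\frac{10n-5}{7}$ is false for small $n$:
\begin{itemize}
\item $\dd\!_3$ has $\sigma=4>\frac{25}{7}$ at every vertex;
\item the blow-up of $\dd\!_3$ with three classes of size two has $\sigma=8>\frac{55}{7}$ at every vertex.
\end{itemize}
Both satisfy the hypotheses of the section. Moving the threshold by $\frac57$ perturbs $F$ by terms of order $n$, against margins of order $n^2$. So redoing the proof of Lemma~\ref{lem:low-total-degree} gives the sharpening only for sufficiently large $n$. The cases below that threshold remain without an argument; for instance at $n=8$ your step gives $35+11=46$, but $45$ is needed.

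The missing idea is the clone amplification used in the paper. The hypotheses depend only on the quotient, so the digraph $D^{(t)}$ obtained by replacing every vertex by $t$ pairwise nonadjacent clones also satisfies them. It has $tn$ vertices and $t^2e(D)$ arcs. The crude bound applied to $D^{(t)}$ gives $t^2e(D)<\frac57tn(tn+1)$, that is, $e(D)<\frac57n^2+\frac{5n}{7t}$. The linear error is now divided by $t$. Taking $t=5n$ together with integrality of $7e(D)$, or simply letting $t\to\infty$, yields $e(D)\le\frac57n^2$.
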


\begin{proof}
The empty case is immediate, so assume $n\ge1$. First consider any nonempty $N$-vertex blow-up $G$ satisfying the assumptions of this section. Repeatedly delete a vertex of low total degree supplied by Lemma~\ref{lem:low-total-degree}. Every induced subdigraph, after empty classes are removed, satisfies the same assumptions, so the procedure continues to the empty graph. When the current graph has $m$ vertices, the number of arcs removed is the total degree of the deleted vertex and is strictly less than $10m/7$. Each arc is counted exactly once, when its first endpoint is deleted. Therefore
\begin{equation*}
  e(G)<\sum_{m=1}^N\frac{10}{7}m
  =\frac57N(N+1).
\end{equation*}

Return to $D$ and take $t=5n$. Replace every actual vertex by $t$ pairwise nonadjacent clones, obtaining $D^{(t)}$. It has the same quotient and
\[
  |V(D^{(t)})|=tn,
  \qquad
  e(D^{(t)})=t^2e(D).
\]
Applying the preceding estimate to $D^{(t)}$ and dividing by $t^2$ gives
\[
  e(D)<\frac57n^2+\frac{5n}{7t}
  =\frac57n^2+\frac17.
\]
Hence $7e(D)<5n^2+1$. Since $7e(D)$ and $5n^2$ are integers, $7e(D)\le5n^2$, proving the stated bound. The argument applies verbatim to every induced subdigraph.
\end{proof}

We will use the following weak-majorization lemma.

\begin{lemma}[Ai et al.~\cite{AiEtAl}]
	\label{lem:weak-majorization}
	Let
	\[
	a_1\ge a_2\ge\cdots\ge a_n\ge0
	\qquad\text{and}\qquad
	b_1\ge b_2\ge\cdots\ge b_n\ge0
	\]
	be two nonincreasing sequences of integers bounded by $M$. Suppose that
	\[
	\sum_{i=1}^j a_i
	\le
	\sum_{i=1}^j b_i
	\qquad\text{for every }1\le j\le n.
	\]
	Then, for every nondecreasing convex function
	$\varphi\colon[0,M]\to\mathbb R$,
	\[
	\sum_{i=1}^n\varphi(a_i)
	\le
	\sum_{i=1}^n\varphi(b_i).
	\]
\end{lemma}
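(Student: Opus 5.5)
This is the standard weak-majorization (Tomić–Weyl / Karamata-type) inequality, and I would prove it by Abel summation using slopes of $\varphi$. Since the $a_i$ and $b_i$ are integers in $[0,M]$, define for each $i$ the chord slope
\[
  c_i=\begin{cases}\dfrac{\varphi(b_i)-\varphi(a_i)}{b_i-a_i},& a_i\ne b_i,\\[1ex]
  \text{a suitable one-sided slope value at }a_i,& a_i=b_i.\end{cases}
\]
Then $\varphi(b_i)-\varphi(a_i)=c_i(b_i-a_i)$ for every $i$, and monotonicity of $\varphi$ gives $c_i\ge0$. In the case $a_i=b_i$ the term vanishes, so any choice of $c_i$ works there. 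I would take $c_i$ to be the right slope $\varphi(a_i+1)-\varphi(a_i)$ when $a_i<M$, and the left slope when $a_i=M$. This keeps all slopes on the integer grid, and the grid is all that is needed.

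The key step is to show that $c_1\ge c_2\ge\cdots\ge c_n$. Convexity means chord slopes are monotone in both endpoints: the slope over $[x,y]$ is nondecreasing in $x$ and in $y$. Because $a_i\ge a_{i+1}$ and $b_i\ge b_{i+1}$, the chord for index $i$ has both endpoints at least those for index $i+1$, so its slope is at least as large. Degenerate chords $a_i=b_i$ are handled by the one-sided slope convention. The right slope at $x$ lies between the chord slopes of intervals ending at or before $x$ and those starting at or after $x$, since for integer chords the slope over $[x,x+1]$ is dominated by any chord with both endpoints $\ge x+1$ or straddling suitably. The one subtle situation is comparing the degenerate point $x=a_i=b_i$ with a neighbouring chord. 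There I would use the three-chord lemma on integer points together with the fact that $\min(a_i,b_i)\ge\min(a_{i+1},b_{i+1})$ and $\max(a_i,b_i)\ge\max(a_{i+1},b_{i+1})$. I expect this bookkeeping of the degenerate cases to be the main obstacle.

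With the slopes monotone, set $S_j=\sum_{i\le j}(b_i-a_i)\ge0$ and $S_0=0$. Abel summation gives
\[
  \sum_{i=1}^n\bigl(\varphi(b_i)-\varphi(a_i)\bigr)=\sum_{i=1}^n c_i(S_i-S_{i-1})
  =\sum_{j=1}^{n-1}(c_j-c_{j+1})S_j+c_nS_n.
\]
Every term on the right is nonnegative, because $c_j\ge c_{j+1}$, $c_n\ge0$ and $S_j\ge0$. This proves the inequality.

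If the degenerate-slope bookkeeping becomes messy, an alternative route reduces to the case where the sequences differ in a single unit transfer. One could also extend $\varphi$ piecewise linearly on $[0,M]$ and write it as $\alpha+\beta x+\sum_k\lambda_k(x-k)^+$ with $\beta,\lambda_k\ge0$. For each function $(x-k)^+$, and for the identity, the inequality follows from the prefix sums, because $\sum_i(a_i-k)^+=\max_j\sum_{i\le j}(a_i-k)$ for nonincreasing sequences.
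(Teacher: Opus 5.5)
The paper gives no proof of this lemma; it is quoted from Ai et al., so there is no in-paper argument to compare with. Your Abel-summation argument, which is the classical Tomi\'c--Weyl proof, is correct. The degenerate-slope bookkeeping you flag as the main obstacle does close under your unit-right-slope convention.

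Write $s(u,v)$ for the chord slope, and put $m_i=\min(a_i,b_i)$ and $M_i=\max(a_i,b_i)$.
\begin{itemize}
\item Suppose $a_i=b_i=x<M$ and index $i+1$ is nondegenerate. Then $m_{i+1}<M_{i+1}\le x$, so $s(m_{i+1},M_{i+1})\le s(m_{i+1},x+1)\le s(x,x+1)$.
\item Suppose index $i$ is nondegenerate and $a_{i+1}=b_{i+1}=y$. Then $y\le m_i<M_i$, and integrality gives $M_i\ge y+1$. Hence $s(y,y+1)\le s(y,M_i)\le s(m_i,M_i)$. The condition you actually need here is only that the chord's left endpoint is at least $y$, not your vaguer ``both endpoints $\ge x+1$ or straddling suitably''.
\item Two consecutive degenerate indices only require comparing unit increments.
\item The left-slope variant at $x=M$ works the same way, using $m_{i+1}\le M-1$.
\end{itemize}
The second case and the left-slope variant are where the integrality hypothesis is genuinely used.

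One small repair is needed. If $M$ is not an integer, replace it by $\lfloor M\rfloor$ first. Otherwise $\varphi(a_i+1)$ may be undefined when $a_i=\lfloor M\rfloor<M$.

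Your fallback route is also complete and arguably cleaner. On the grid $0,1,\ldots,M$ one has $\varphi(x)=\varphi(0)+\beta x+\sum_{k=1}^{M-1}\lambda_k(x-k)^+$. Here $\beta=\varphi(1)-\varphi(0)\ge0$ and $\lambda_k=\varphi(k+1)-2\varphi(k)+\varphi(k-1)\ge0$. For each $k$,
$\sum_i(a_i-k)^+=\max_{0\le j\le n}\sum_{i\le j}(a_i-k)\le\sum_i(b_i-k)^+$,
where the index $j=0$ must be allowed in the maximum. This route avoids all case analysis. It uses only that $(a_i)$ is nonincreasing, whereas the Abel-summation proof needs both sequences ordered to make the slopes monotone.
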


\begin{proposition}\label{prop:no-core-deficit}
Let $n\ge1$, and let $D$ be an $n$-vertex independent blow-up of a $\Reg$-free semicomplete quotient containing no bidirected $K_4$. Then
\begin{equation}\label{eq:79-147-bound}
  \Phi(D)\le\frac{79}{147}n^3-\frac{4}{147}n.
\end{equation}
In particular, for every $n\ge5$,
\begin{equation*}
  \Phi(D)<B_4(n).
\end{equation*}
\end{proposition}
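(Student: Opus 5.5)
The plan is to turn the arc bound of Lemma~\ref{lem:finite-blowup-arc}, which holds for every induced subdigraph, into prefix bounds on the sorted out-degree sequence, and then to apply weak majorization (Lemma~\ref{lem:weak-majorization}) with $\varphi(t)=t^2$.

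\textbf{Prefix bound.} List the out-degrees of $D$ as $d_1\ge d_2\ge\cdots\ge d_n$ and let $W_j$ be the set of the $j$ vertices realizing $d_1,\dots,d_j$. Each out-arc from $W_j$ either stays inside $W_j$ or leaves it. Hence
\[
  \sum_{i=1}^j d_i\le e(D[W_j])+j(n-j)\le \tfrac57j^2+j(n-j)=jn-\tfrac27j^2=:g(j).
\]
Set $b_i=g(i)-g(i-1)=n-\tfrac27(2i-1)$. This sequence is nonincreasing and satisfies $0<b_i\le n$, and its prefix sums are exactly $g(j)$. So $(d_i)$ is weakly submajorized by $(b_i)$. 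Lemma~\ref{lem:weak-majorization} is stated for integer sequences, but the Tomić--Weyl argument behind it works verbatim for real sequences bounded by $M=n$. I will apply it in that form to get
\[
  \Phi(D)\le\sum_{i=1}^n\Bigl(n-\tfrac27(2i-1)\Bigr)^2 .
\]
This sum is evaluated with $\sum_i(2i-1)=n^2$ and $\sum_i(2i-1)^2=n(4n^2-1)/3$:
\[
  n^3-\tfrac47n^3+\tfrac{4}{49}\cdot\tfrac{n(4n^2-1)}{3}=\tfrac{79}{147}n^3-\tfrac{4}{147}n,
\]
which is \eqref{eq:79-147-bound}.

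\textbf{Comparison with $B_4(n)$ for $n\ge7$.} Write the part sizes of $\dT\!_4(n)$ as $n/4+x_i$ with $\sum_i x_i=0$ and $|x_i|\le 3/4$. Expanding $(n/4+x)(3n/4-x)^2$ gives
\[
  B_4(n)=\tfrac{9}{16}n^3-\tfrac54n\sum_i x_i^2+\sum_i x_i^3 .
\]
The deviations satisfy $\sum x_i^2\le 3/4$ and $|\sum x_i^3|\le1$, so $B_4(n)\ge \tfrac9{16}n^3-\tfrac{15}{16}n-1$. Since $\tfrac9{16}-\tfrac{79}{147}=\tfrac{59}{2352}$, it suffices to have
\[
  \tfrac{59}{2352}n^3>\tfrac{15}{16}n+1-\tfrac4{147}n,
\]
which holds for $n\ge7$ (at $n=7$ the two sides are about $8.6$ and $7.4$).

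\textbf{The cases $n=5,6$.} This is the main obstacle: the real bound is too weak here. At $n=5$ it gives about $67.04$, while $B_4(5)=66$. For these two cases I will sharpen the prefix caps by integrality. The prefix sums are integers, so they are at most $\min\{j(n-1),\lfloor g(j)\rfloor\}$, using also $d_i\le n-1$. I then take the greedy nonincreasing integer sequence with these prefix sums and apply Lemma~\ref{lem:weak-majorization} in its integer form.
\begin{itemize}
  \item For $n=5$ the caps are $4,8,12,15,17$, realized by $(4,4,4,3,2)$. This gives $\Phi(D)\le61<66=B_4(5)$.
  \item For $n=6$ the caps are $5,10,15,19,22,25$, realized by $(5,5,5,4,3,3)$. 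This gives $\Phi(D)\le109<114=B_4(6)$.
\end{itemize}
Together with the case $n\ge7$, this establishes $\Phi(D)<B_4(n)$ for every $n\ge5$.
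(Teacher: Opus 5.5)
Your strategy is the paper's: prefix bounds from Lemma~\ref{lem:finite-blowup-arc}, weak majorization against $n-\frac27(2i-1)$, and integer-rounded prefix caps for $n=5,6$. Your sequences $(4,4,4,3,2)$ and $(5,5,5,4,3,3)$ match the paper's. Using the majorization lemma for real sequences is legitimate, although the paper avoids this by multiplying by $7$, which makes both sequences integral.

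The problem is the comparison with $B_4(n)$ for $n\ge7$: the claim $\sum_i x_i^2\le\frac34$ is false. Write $n=4q+\rho$. The deviations are $1-\frac\rho4$ on $\rho$ parts and $-\frac\rho4$ on the other $4-\rho$ parts, so
\[
  \sum_i x_i^2=\frac{\rho(4-\rho)}{4},\qquad \sum_i x_i^3=\frac{\rho(4-\rho)(2-\rho)}{8}.
\]
For $\rho=2$ the first sum equals $1$. Hence your bound $B_4(n)\ge\frac9{16}n^3-\frac{15}{16}n-1$ fails for every $n\equiv2\pmod 4$ with $n\ge6$. For instance, $B_4(10)=550$, whereas your bound gives $552.125$.

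The crude repair $\sum_i x_i^2\le1$, $|\sum_i x_i^3|\le1$ does not suffice either. The inequality you would then need, $\frac{59}{2352}n^3>\frac54n+1-\frac4{147}n$, fails at $n=7$ (about $8.60$ against $9.56$).

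The fix is to use the two moments together, as the paper does. Substituting them gives
\[
  B_4(n)=\frac{9n^3-\rho(4-\rho)(5n+2\rho-4)}{16}.
\]
Checking $\rho=1,2,3$ then gives $B_4(n)\ge\frac9{16}n^3-\frac54n$; the cubic moment vanishes exactly when the quadratic one equals $1$. Consequently
\[
  B_4(n)-\Phi(D)\ge\frac{n}{2352}\bigl(59n^2-2876\bigr)>0\qquad(n\ge7),
\]
because $59\cdot7^2-2876=15$. With this correction your proof is complete.
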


\begin{proof}
Order the out-degrees as $d_1\ge\cdots\ge d_n$, and let $W_j$ be the set of the first $j$ vertices. By Lemma~\ref{lem:finite-blowup-arc},
\begin{align}
  \sum_{i=1}^j d_i
  &=e(W_j)+e(W_j,V(D)\setminus W_j)\notag\\
  &\le\frac57j^2+j(n-j)
   =nj-\frac27j^2.
  \label{eq:degree-prefix-reg5}
\end{align}
The cross term counts only arcs directed from $W_j$ to its complement and is at most $j(n-j)$.

Set
\[
  z_i=n-\frac27(2i-1)
  \qquad(1\le i\le n).
\]
The first $j$ values of $z_i$ sum to $nj-2j^2/7$. Thus
\eqref{eq:degree-prefix-reg5} implies
\[
\sum_{i=1}^j 7d_i
\le
\sum_{i=1}^j 7z_i
\qquad(1\le j\le n).
\]
Both $(7d_i)$ and $(7z_i)$ are nonincreasing sequences of nonnegative
integers bounded by $7n$. Applying
Lemma~\ref{lem:weak-majorization} to these two sequences with
$\varphi(x)=x^2$, and then dividing by $49$, gives
\begin{align*}
  \Phi(D)
  &\le\sum_{i=1}^n\left[n-\frac27(2i-1)\right]^2\\
  &=n^3-\frac47n^3+\frac4{49}\frac{n(4n^2-1)}3
   =\frac{79}{147}n^3-\frac4{147}n.
\end{align*}
This proves \eqref{eq:79-147-bound}.

To compare with the balanced four-partite benchmark, write $n=4q+\rho$.
A direct expansion of the definition gives
\[
  B_4(n)
  =\frac{9n^3-\rho(4-\rho)(5n+2\rho-4)}{16}.
\]
Thus the correction subtracted from $9n^3/16$ is zero when $\rho=0$ and,
for $\rho=1,2,3$, is respectively
\[
  \frac{3(5n-2)}{16},
  \qquad
  \frac{5n}{4},
  \qquad
  \frac{3(5n+2)}{16},
\]
each at most $5n/4$. Hence
\[
  B_4(n)\ge\frac9{16}n^3-\frac54n.
\]
For $n\ge7$, combining this with \eqref{eq:79-147-bound} gives
\[
  B_4(n)-\Phi(D)
  \ge\frac{n}{2352}(59n^2-2876)>0,
\]
because $59\cdot7^2-2876=15$.

It remains to handle $n=5,6$. It is easy to know that $B_4(5)=66$ and $B_4(6)=114$. Since the left-hand side of \eqref{eq:degree-prefix-reg5} is integral, its right-hand side may be rounded down. For each $n\in\{5,6\}$, set
\[
  s_0=0,
  \qquad
  s_j=\left\lfloor nj-\frac27j^2\right\rfloor \quad (1\le j\le n),
  \qquad
  \widetilde z_i=s_i-s_{i-1}.
\]
The resulting prefix bounds, their difference sequences, and the corresponding square sums are
\[
\begin{array}{c|c|c|c|c}
 n & \text{prefix bounds} & (\widetilde z_i) & \sum \widetilde z_i^2 & B_4(n)\\
\hline
5&(4,8,12,15,17)&(4,4,4,3,2)&61&66\\
6&(5,10,15,19,22,25)&(5,5,5,4,3,3)&109&114.
\end{array}
\]
The two displayed difference sequences are nonincreasing nonnegative integer sequences, and \eqref{eq:degree-prefix-reg5} gives $\sum_{i=1}^j d_i\le s_j=\sum_{i=1}^j\widetilde z_i$ for every $j$. Applying Lemma~\ref{lem:weak-majorization} directly gives $\Phi(D)\le61<66$ and $\Phi(D)\le109<114$, respectively.
\end{proof}

Proposition~\ref{prop:no-core-deficit} rules out extremality when the symmetrized quotient has no bidirected $K_4$. We now treat the remaining case in next section.

\section{Proof of Theorem~\ref{thm:reg5-main}}\label{sec:four-deletion}

The main tool is a deletion estimate whose equality case identifies the extremal graph. We first state it and derive Theorem~\ref{thm:reg5-main}, and then prove it thereafter.
%Sections~\ref{subsec:proof-four-deletion}.
%\subsection{The deletion statement and the main theorem}\label{subsec:deletion-statement}

\begin{theorem}\label{thm:four-vertex-deletion}
Let $D$ be an $n$-vertex independent blow-up whose semicomplete quotient $Q$ is $\Reg$-free and contains a $\dd\!_4$. Let $V_1,V_2,V_3,V_4$ be the four corresponding independent classes, choose $c_i\in V_i$, and put
\[
  C=\{c_1,c_2,c_3,c_4\},
  \qquad
  t=|V(D)\setminus(V_1\cup V_2\cup V_3\cup V_4)|.
\]
Then
\begin{equation*}
  \Phi(D)-\Phi(D-C)
  \le B_4(n)-B_4(n-4)-t.
\end{equation*}
In particular,
\begin{equation}\label{eq:four-delete-plain}
  \Phi(D)-\Phi(D-C)
  \le B_4(n)-B_4(n-4).
\end{equation}
Equality in \eqref{eq:four-delete-plain} holds if and only if $D\cong \dT\!_4(n)$.
\end{theorem}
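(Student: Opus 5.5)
The plan is to compute the deletion loss exactly and then bound it vertex by vertex. Write $d(x)=d_D^+(x)$ and $\delta(x)=d_C^+(x)$ for $x\notin C$. Deleting $C$ removes the four terms $d(c_i)^2$ and lowers each remaining out-degree by $\delta(x)$, so
\[
  \Phi(D)-\Phi(D-C)=\sum_{i=1}^4 d(c_i)^2+\sum_{x\notin C}\bigl(2d(x)\delta(x)-\delta(x)^2\bigr).
\]
For $\dT\!_4(n)$ with $C$ one vertex per part, the same expansion gives exactly $B_4(n)-B_4(n-4)$. So the goal is to show that every deviation from that configuration only loses.

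\textbf{Step 1: local structure around $C$.} Put $a_i=|V_i|$ and let $T$ be the set of $t$ vertices outside $V_1\cup\cdots\cup V_4$.
\begin{itemize}
\item A vertex $x\in V_i\setminus\{c_i\}$ is nonadjacent to $c_i$ and forms digons with the other three, so $\delta(x)=3$.
\item For $x\in T$ with quotient vertex $w$, the set $\{w\}\cup C$ has at most two non-digon pairs in the quotient. Since $\Reg$ is vertex-transitive and every $5$-vertex semicomplete digraph with at most one non-digon pair contains $\Reg$, $w$ has at most two digons into $C$.
\item If $w$ has exactly two digons, say to $c_1,c_2$, then the arcs to $c_3,c_4$ cannot both leave $w$: otherwise $w$ with in-set $\{c_1,c_2\}$ and out-set $\{c_3,c_4\}$ completes $\Reg$ on the bidirected $K_4$. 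By the symmetric argument they cannot both enter $w$. Hence $\delta(x)=3$ and $\delta(x)+d_C^-(x)=6$.
\item If $w$ has at most one digon into $C$, a similar case check bounds $\delta(x)\le4$ while $\delta(x)+d_C^-(x)\le5$.
\end{itemize}
I will record these facts as a short lemma. They say that outside vertices look at most as good as class vertices, but with one unit of "in-arc from $C$" missing whenever $\delta(x)$ exceeds $3$.

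\textbf{Step 2: a per-vertex inequality.} Rewrite
\[
\sum_i d(c_i)^2=\sum_i d(c_i)\bigl(12\text{-internal part}+|N^+(c_i)\setminus C|\bigr),
\]
and regroup the arcs from $C$ to outside by their heads. Each $x\notin C$ then contributes $2d(x)\delta(x)-\delta(x)^2$ plus the sum of $d(c_i)$ over those $c_i$ sending an arc to $x$.
\begin{itemize}
\item Use $d(x)\le d(c_i)$-type comparisons, which come from convexity and the exchange inequality $2uv\le u^2+v^2$ applied pairwise between $d(x)$ and $d(c_i)$.
\item Use the Step 1 counts, which show that each $x$ is linked to $C$ by at most six arc-incidences.
\end{itemize}
Together these should bound the whole loss by a quadratic expression $G(a_1,\dots,a_4,t)$ in the class sizes and $t$. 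The aim is that each vertex of $T$ costs at least $1$ relative to a class vertex, which produces the $-t$ term.

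\textbf{Step 3: optimize over class sizes.} Compare $G$ with $B_4(n)-B_4(n-4)$ using the balancing argument of Lemma~\ref{lem:balance}, applied to $f(x)=x(n-x)^2$ evaluated at the shifted sizes. This should show that $G$ is maximized, with $t=0$, exactly at the balanced partition.

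\textbf{Equality.} Equality in \eqref{eq:four-delete-plain} forces $t=0$, so $D$ is an independent blow-up of the bidirected $K_4$ on $V_1,\dots,V_4$. It also forces all pairwise exchange inequalities to be tight, which makes the $a_i$ balanced, so $D\cong\dT\!_4(n)$. The converse is the direct computation mentioned at the start.

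\textbf{Main obstacle.} The hardest step is Step 2. The factor $2d(x)\delta(x)$ involves the global degree $d(x)$, which is not bounded locally. My first attempt will be to charge $d(x)$ against $d(c_i)$ for each arc between $x$ and $c_i$, using that a digon contributes symmetrically to both. I will then absorb the resulting cross terms $d(x)d(c_i)$ into squares, so that only the local counts from Step 1 remain. If this pairing fails to give the exact constant, the fallback is to use the maximality of degrees implicit in the symmetrized structure of Lemma~\ref{lem:reg5-symmetrization}, namely that clones have equal out-degree, to bound $d(x)$ by $n-a_j$ for the appropriate class.
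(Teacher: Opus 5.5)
Your exact expansion of the loss is correct, but there is a genuine gap. Step~2, which you yourself flag as the main obstacle, is not yet an argument, and Step~1 is too weak to support the $-t$ term.

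Write $h_x=\delta(x)$, $k_x=d_C^-(x)$ and $I_x=N^-(x)\cap C$. Consider the two-digon case you allow: $w\leftrightarrow c_1,c_2$, $w\to c_3$, $c_4\to w$. Give $w$ the out-set $\{c_1,c_3\}$ and the in-set $\{c_2,c_4\}$. Since the core is bidirected, this already completes $\Reg$.

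More generally, suppose an external $x$ has $h_x\ge2$ and $k_x\ge2$. Choose two out-neighbours in $C$ containing $C\setminus I_x$, and use the other two core vertices as in-neighbours. This again gives $\Reg$. Hence every external vertex has $h_x\le1$ or $k_x\le1$.

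In particular, no external vertex can have the class-vertex profile $h_x=k_x=3$, which your Step~1 permits. Such a vertex would be locally indistinguishable from a class vertex. Nothing in your argument then makes a vertex of $T$ ``cost at least $1$''. So the $-t$ term does not follow, and neither does the deduction $t=0$ in the equality case. The same dichotomy is also what controls pair terms: an external vertex with $h_x\ge2$ has $k_x\le1$, hence $|I_x\cap I_y|\le1$.

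The more serious problem is Step~2. Put $Y=V(D)\setminus C$, $b=n-4$ and $z_i=|N^+(c_i)\cap Y|$. The loss is exactly $36+\sum_{x\in Y}(h_x^2+6k_x)+P(Y)$, where $P(Y)=\sum_i z_i^2+2\sum_{x\in Y}h_xd_Y^+(x)$.

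The quadratic part $P(Y)$ depends on the arcs of $D[Y]$, including arcs among external vertices. These arcs are not controlled by $a_1,\dots,a_4,t$, so no bound $G(a_1,\dots,a_4,t)$ can come from local counts. There is no inequality $d(x)\le d(c_i)$. The exchange $2uv\le u^2+v^2$ only trades cross terms for non-local squares $d(x)^2$.

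Your fallback is also unavailable. The theorem concerns an arbitrary blow-up, not an extremal one. Moreover, $d(x)\le n-a_j$ is far too weak for an external vertex with $h_x=4$ lying in a small class.

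The missing idea is to rewrite $P(Y)$ as a sum over vertices, unordered pairs and arcs of $D[Y]$:
$\sum_x k_x+2\sum_{\{x,y\}}|I_x\cap I_y|+2\sum_{xy\in A(D[Y])}h_x$.
Then budget these terms against a global Tur\'an bound:
\begin{enumerate}
\item On $H=\{x\in Y:h_x\ge3\}$, allot $3$ per vertex and $6+5m_{xy}$ per pair, where $m_{xy}$ is the number of arcs between $x$ and $y$.
\item Digon pairs inside $T_i=\{x\in H:h_x=4,\ I_x=\{c_i\}\}$ overshoot this allotment, so treat each $T_i$ collectively. Since $D[T_i]$ is $R_4$-free, $e(T_i)\le\frac23|T_i|^2$.
\item Bound $L=Y\setminus H$ and the $H$--$L$ cross terms crudely.
\item Finally use $e(D[Y])\le e_4(b)$ and $e(D[H])\le e_4(|H|)$, which follow from Theorem~\ref{thm:Brown-Harary} and Tur\'an's theorem.
\end{enumerate}
This gives $P(Y)\le3b^2+5e_4(b)+|L|$. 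The dichotomy bounds the linear part by $27b-2t$. Since $|L|\le t$ and $B_4(b+4)-B_4(b)=36+27b+3b^2+5e_4(b)$, these two bounds give the theorem including the $-t$ term.
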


\begin{proof}[\bfseries{Proof of Theorem~\ref{thm:reg5-main}}]
The digraph $\dT\!_4(n)$ is $\Reg$-free, and its degree-square sum
equals the value stated in the theorem. It remains to prove the upper
bound and uniqueness.

For $n\le4$, every digraph is $\Reg$-free. Obviously, the unique extremal digraph is $\dd\!_n=\dT\!_4(n)$.

Assume $n\ge5$ and argue by strong induction on $n$. Let $D$ be an $n$-vertex $\Reg$-free digraph maximizing $\Phi$. By Lemma~\ref{lem:reg5-symmetrization}, there is an independent blow-up $\widehat D$ with semicomplete quotient $Q$ such that every labelled out-degree is preserved. In particular,
\begin{equation*}
  \Phi(D)=\Phi(\widehat D).
\end{equation*}

If $Q$ contained no $\dd\!_4$, Proposition~\ref{prop:no-core-deficit} would give $\Phi(\widehat D)<B_4(n)$, a contradiction. Hence $Q$ contains a $\dd\!_4$. Choose one vertex from each of its four blow-up classes to form $C$. By Theorem~\ref{thm:four-vertex-deletion} and the induction hypothesis,
\begin{align*}
  \Phi(D)=\Phi(\widehat D)
  &\le\Phi(\widehat D-C)+B_4(n)-B_4(n-4)\\
  &\le B_4(n-4)+B_4(n)-B_4(n-4)\\
  &=B_4(n).
\end{align*}
Here $\widehat D-C$ is merely an $(n-4)$-vertex $\Reg$-free digraph. Since $\dT\!_4(n)$ provides the lower bound, this establishes the exact value.

Now suppose that the original digraph $D$ is extremal. Every inequality above is then an equality, and in particular equality holds in \eqref{eq:four-delete-plain} for $\widehat D$ and $C$. The equality statement in Theorem~\ref{thm:four-vertex-deletion} therefore gives
\begin{equation*}
  \widehat D\cong \dT\!_4(n).
\end{equation*}
It remains to recover $D$. The nonadjacency classes used to construct $\widehat D$ are independent in $D$, while any two distinct classes are adjacent. Since $\widehat D\cong \dT\!_4(n)$, these classes are precisely the four balanced independent parts of $\dT\!_4(n)$. Fix $x\in V_i$. We have
\[
  d_D^+(x)=d_{\widehat D}^+(x)=n-|V_i|.
\]
Since $V_i$ is already independent in $D$, all out-neighbors of $x$ lie outside $V_i$, and there are exactly $n-|V_i|$ such vertices. Thus $x$ sends an arc to every vertex outside its class. Applying this to every vertex shows that both directions are present between every two different parts. Therefore $D\cong \dT\!_4(n)$. The converse was verified by the lower-bound construction.
\end{proof}

\vskip2mm
Now, we are left to complete the proof of  Theorem~\ref{thm:four-vertex-deletion}.

Let $D$,
$V_1,\ldots,V_4$, and $C=\{c_1,c_2,c_3,c_4\}$ be as in Theorem \ref{thm:four-vertex-deletion}. Write
\begin{equation*}
  % \qquad
  Y=V(D)\setminus C,
  \qquad
  b=|Y|=n-4,
\end{equation*}
and
\begin{equation*}
  t=|V(D)\setminus(V_1\cup V_2\cup V_3\cup V_4)|.
\end{equation*}
The four core vertices form pairwise digons. For $x\in Y$, define
\begin{equation*}
  O_x=N_D^+(x)\cap C,
  \qquad
  I_x=N_D^-(x)\cap C,
  \qquad
  h_x=|O_x|,
  \qquad
  k_x=|I_x|.
\end{equation*}
If $x\in V_i\setminus\{c_i\}$, then
\begin{equation*}
  O_x=I_x=C\setminus\{c_i\},
  \qquad
  h_x=k_x=3.
\end{equation*}

%The rest of this section is devoted to the proof of 

%\subsection{Proof of Theorem~\ref{thm:four-vertex-deletion}}\label{subsec:proof-four-deletion}

%
\vskip 2mm
We first establish four auxiliary lemmas and then combine them to prove
Theorem~\ref{thm:four-vertex-deletion}.
 %Throughout this subsection, 

\begin{lemma}\label{lem:core-types}
If $x$ lies outside the four core classes, then
\begin{equation*}
  h_x\le1
  \quad\text{or}\quad
  k_x\le1,
\end{equation*}
and the two inequalities cannot both hold.
\end{lemma}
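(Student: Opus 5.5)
The proof will use only the fact that $C$ spans a $\dd\!_4$, together with semicompleteness of the quotient and the structure of $\Reg$ around a single vertex. Let $x$ lie outside $V_1\cup\cdots\cup V_4$. Then $x$ lies in a quotient class different from those of $c_1,\dots,c_4$. Since $Q$ is semicomplete and $D$ is its independent blow-up, $x$ is joined to each $c_i$ by at least one arc. Hence every $c_i$ lies in $O_x\cup I_x$, that is,
\[
  O_x\cup I_x=C .
\]

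The second assertion follows at once. If both $h_x\le1$ and $k_x\le1$, then $|O_x\cup I_x|\le2<4$, which contradicts the display.

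For the first assertion, suppose for contradiction that $h_x\ge2$ and $k_x\ge2$. In $\Reg$ on $\mathbb Z_5$, vertex $0$ has out-neighbours $\{1,2\}$ and in-neighbours $\{3,4\}$, and $\Reg-0\cong R_4$. It therefore suffices to partition $C$ into two disjoint pairs $A\subseteq O_x$ and $B\subseteq I_x$ and then embed $\Reg$ by the following map:
\begin{itemize}
  \item $0\mapsto x$;
  \item $\{1,2\}\mapsto A$;
  \item $\{3,4\}\mapsto B$.
\end{itemize}
All arcs from $x$ to $A$ and from $B$ to $x$ are present by the definitions of $O_x$ and $I_x$. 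Every arc of $\Reg-0$ required among $\{1,2,3,4\}$ is available, because $C$ induces a $\dd\!_4$ in $D$. This gives a copy of $\Reg$ in $D$. That is a contradiction: $D$ contains $\Reg$ if and only if its quotient $Q$ does, and $Q$ is $\Reg$-free.

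It remains to find the pairs $A$ and $B$, which is a small counting step. Put
\[
  P=O_x\setminus I_x,\qquad R=I_x\setminus O_x,\qquad S=O_x\cap I_x,
\]
so that $|P|+|R|+|S|=4$. From $|R|+|S|=k_x\ge2$ we get $|P|\le2$, and symmetrically $|R|\le2$. Place all of $P$ into $A$ and all of $R$ into $B$. Then distribute the vertices of $S$, which are digon-neighbours of $x$ and so may go on either side, so as to fill $A$ and $B$ to size two each. This is possible because the sizes add up to exactly $4$.

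I do not expect a genuine obstacle. The one point to verify carefully is that the required orientation inside $C$ matches $R_4$ with $\{1,2\}$ as the out-pair of the removed vertex, and the full digon structure of the core makes this automatic.
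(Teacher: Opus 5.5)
Your proof is correct and follows the paper's argument: semicompleteness gives $O_x\cup I_x=C$, and if $h_x,k_x\ge2$ you split $C$ into an out-pair inside $O_x$ and an in-pair inside $I_x$, then use the digons of the core to complete a copy of $\Reg$. Your $P,R,S$ bookkeeping is simply a more explicit version of the paper's step of choosing a two-element $O\subseteq O_x$ that contains $C\setminus I_x$.
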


\begin{proof}
The quotient is semicomplete and $x$ belongs to none of the four core classes, so $O_x\cup I_x=C$. Suppose that $h_x,k_x\ge2$. The set $C\setminus I_x$ has at most two vertices and is contained in $O_x$. Choose a two-element set $O\subseteq O_x$ containing $C\setminus I_x$. Then $C\setminus O\subseteq I_x$. Select the four arcs $x\to O$ and $C\setminus O\to x$, so that $x$ has exactly two out-neighbors and two in-neighbors in the core. Since every core pair is a digon, the remaining six arcs can be selected to complete a copy of $\Reg$, a contradiction.

Also, $O_x\cup I_x=C$ implies $h_x+k_x\ge4$, so both $h_x$ and $k_x$ cannot be at most one.
\end{proof}

Using Lemma~\ref{lem:core-types}, partition $Y$ into two blocks:
\begin{equation*}
  L=\{x\in Y:h_x\le1\},
  \qquad
  H=Y\setminus L,
  \qquad
  \ell=|L|,
  \qquad
  a=|H|.
\end{equation*}
Then
\begin{equation*}
  b=a+\ell,
  \qquad
  0\le\ell\le t.
\end{equation*}
The block $H$ contains all remaining vertices in the four core classes, as well as every external vertex with $k_x\le1$. Lemma~\ref{lem:core-types} and the description of vertices in the core classes show that every $x\in H$ satisfies
\begin{equation*}
  3\le h_x\le4,
  \qquad
  k_x\le3,
  \qquad
  h_x+k_x\le6.
\end{equation*}
If $x\in H$ lies outside the core classes, then in fact $k_x\le1$.

To record the quadratic terms arising when $C$ is deleted, for $X\subseteq Y$ set
\begin{equation*}
  z_i(X)=|N_D^+(c_i)\cap X|,
  \qquad
  P(X)=\sum_{i=1}^4z_i(X)^2+2\sum_{x\in X}h_xd_X^+(x).
\end{equation*}

The next calculation identifies the balanced four-partite benchmark and
will also be used to determine the equality case.

\begin{lemma}\label{lem:four-partite-increment}
Let $G$ be a complete bidirected four-partite graph with part sizes
$b_1,\ldots,b_4\ge0$ and total order $b$, where empty parts are allowed.
Let $G'$ be obtained by adding one vertex to each of the four parts. Then
\begin{equation}\label{eq:four-partite-increment}
  \Phi(G')-\Phi(G)=36+27b+8b^2-5\sum_{i=1}^4b_i^2.
\end{equation}
In particular,
\begin{equation}\label{eq:balanced-four-increment}
  B_4(b+4)-B_4(b)=36+27b+3b^2+5e_4(b).
\end{equation}
\end{lemma}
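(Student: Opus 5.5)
The identity \eqref{eq:four-partite-increment} is a direct computation, and \eqref{eq:balanced-four-increment} follows by specializing to balanced part sizes. I will compute the increment part by part and then sum, using only the fact that $\sum_i b_i=b$.

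In $G$, every vertex of part $i$ has out-degree $d_i:=b-b_i$, since it sends both arcs to every vertex outside its part. In $G'$ the total order is $b+4$ and part $i$ has size $b_i+1$. Hence every vertex of part $i$ has out-degree $b+4-(b_i+1)=d_i+3$. Therefore
\[
  \Phi(G')-\Phi(G)=\sum_{i=1}^4\Bigl[(b_i+1)(d_i+3)^2-b_id_i^2\Bigr]
  =\sum_{i=1}^4\Bigl[6b_id_i+9b_i+d_i^2+6d_i+9\Bigr].
\]
This is valid also for empty parts ($b_i=0$), which then contribute $(d_i+3)^2$, the square of the new vertex's out-degree.

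Next I evaluate the three power sums in terms of $b$ and $S:=\sum_i b_i^2$:
\[
  \sum_i d_i=4b-b=3b,\qquad
  \sum_i d_i^2=4b^2-2b^2+S=2b^2+S,\qquad
  \sum_i b_id_i=b^2-S.
\]
Substituting gives
\[
  6(b^2-S)+9b+(2b^2+S)+18b+36=36+27b+8b^2-5S,
\]
which is \eqref{eq:four-partite-increment}.

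For \eqref{eq:balanced-four-increment}, take $G=\dT\!_4(b)$, whose part sizes are balanced (with empty parts when $b<4$). Adding one vertex to each part keeps the sizes differing by at most one, so $G'\cong\dT\!_4(b+4)$. Hence $\Phi(G)=B_4(b)$ and $\Phi(G')=B_4(b+4)$, also for $b=0$ by the stated convention. In this case $S=p_4(b)=b^2-e_4(b)$, so $8b^2-5S=3b^2+5e_4(b)$, which yields \eqref{eq:balanced-four-increment}.

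There is no real obstacle here. The only points needing care are the bookkeeping of $\sum_i b_id_i$ and checking that the balanced case, including $b<4$ with empty parts, is exactly $\dT\!_4$.
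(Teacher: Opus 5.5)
Your proposal is correct and follows the paper's proof essentially verbatim: you compute the per-part increment $(b_i+1)(d_i+3)^2-b_id_i^2$ with $d_i=b-b_i$ (the paper writes the same quantity as the new vertex's $(d_i+3)^2$ plus the increase $b_i\bigl((d_i+3)^2-d_i^2\bigr)$ of the old vertices), and you then specialize to balanced parts using $\sum_i b_i^2=p_4(b)=b^2-e_4(b)$. Your explicit evaluation of $\sum_i d_i$, $\sum_i d_i^2$ and $\sum_i b_id_i$ fills in the algebra that the paper leaves implicit.
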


\begin{proof}
Put $d_i=b-b_i$. Every old out-degree increases by three, and the new
vertex in part $i$ has out-degree $d_i+3$. Hence
\begin{align*}
  \Phi(G')-\Phi(G)
  &=\sum_{i=1}^4(d_i+3)^2
    +\sum_{i=1}^4b_i\bigl((d_i+3)^2-d_i^2\bigr)\\*
  &=36+27b+8b^2-5\sum_{i=1}^4b_i^2,
\end{align*}
which proves \eqref{eq:four-partite-increment}. For the balanced
four-partite graph, $\sum_i b_i^2=p_4(b)=b^2-e_4(b)$, and adding one vertex
to each part preserves balance. Substitution gives
\eqref{eq:balanced-four-increment}.
\end{proof}

We now estimate the quadratic term $P(Y)$, starting with its restriction
to $H$.

\begin{lemma}\label{lem:P-H}
For the block $H$ defined above,
\begin{equation}\label{eq:P-H-bound}
  P(H)\le3a^2+5e(H).
\end{equation}
\end{lemma}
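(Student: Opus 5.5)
The plan is to expand both sides of \eqref{eq:P-H-bound} as sums over ordered pairs of vertices of $H$, and then compare them pair by pair, using the structure of $H$ and the $\Reg$-freeness of $D$ to control the pairs where the comparison is tight.

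\textbf{Step 1: rewrite both sides as pair sums.} Since $z_i(H)$ counts the $x\in H$ with $c_i\in I_x$, we have
\[
\sum_{i=1}^4 z_i(H)^2=\sum_{(x,y)\in H\times H}|I_x\cap I_y|,
\]
where the diagonal term for $x=y$ is $k_x$. Also
\[
2\sum_{x\in H}h_x d_H^+(x)=\sum_{x\ne y}2h_x\ind_{x\to y},
\qquad
3a^2+5e(H)=\sum_{(x,y)}\bigl(3+5\ind_{x\to y}\bigr).
\]
So it suffices to show that the contribution of each diagonal term and each unordered pair $\{x,y\}$ to $P(H)$ is at most its contribution to the right-hand side, possibly after redistributing slack.

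\textbf{Step 2: the easy cases.} The diagonal terms give $k_x\le 3$, so they are fine, with slack at least $2$ whenever $x$ is external (then $k_x\le1$). A nonadjacent pair lies in one core class, and contributes $2|I_x\cap I_y|\le 6$. Now take a one-way pair $x\to y$. We need $|I_x\cap I_y|+h_x\le 5$. If $x$ is external, then $k_x\le1$ and $h_x\le4$. If $x$ lies in a core class, then $h_x=k_x=3$, and equality $I_y\supseteq I_x$ would force $y$ into the same class, which is impossible since $x$ and $y$ are adjacent. For a digon $x\leftrightarrow y$ we need
\[
|I_x\cap I_y|+h_x+h_y\le 8 .
\]
This holds whenever at least one of $x,y$ lies in a core class: two core vertices from different classes give $2+3+3$, and a core vertex with an external one gives at most $1+3+4$. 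It also holds when both are external but do not share their unique in-neighbour in $C$.

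\textbf{Step 3 (main obstacle): external digons with a shared in-neighbour.} The only failing configuration is a digon $x\leftrightarrow y$ of external vertices with $h_x=h_y=4$ and $I_x=I_y=\{c_j\}$. Such a pair has excess exactly $2$. The local check does not exclude it: in $\{x,y\}\cup C$ each of $x,y$ would need two in-neighbours in any copy of $\Reg$, but only $c_j$ and each other are available. So the argument must be global.

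For each $j$, let $S_j$ be the set of external vertices of $H$ with $h_x=4$ and $I_x=\{c_j\}$, and let $M_j$ be the digon graph on $S_j$. Summing the Step 2 bounds, the total excess is at most $2|E(M_j)|$ summed over $j$, minus the slack. I would try to absorb this excess in two ways:
\begin{itemize}
\item the diagonal slack $2|S_j|$;
\item the slack of one-way pairs $x\to y$ with $x\in S_j$, each of which contributes at most $1+4=5<5.5$, so $1$ per ordered pair.
\end{itemize}
This requires showing that $M_j$ is sparse, ideally a matching or a star forest. The tool would be the quotient version of Lemma~\ref{lem:reg5-local}: any two quotient vertices have at most two common digon-neighbours. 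I would also search for copies of $\Reg$ formed by a short digon path in $S_j$ together with $c_j$ and two other core vertices, or with a core-class vertex of $H$.

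If sparsity of $M_j$ cannot be established, the fallback is to use the freedom in choosing the representatives $c_i$ and to reorganize the $z_j(H)^2$ term. Concretely, I would bound $z_j(H)^2$ directly as
\[
z_j(H)^2\le\bigl(|S_j|+\text{rest}\bigr)^2,
\]
and trade the quadratic $|S_j|^2$ part against the $3a^2$ budget restricted to $S_j\times S_j$. Inside $S_j$ the budget is $3$ per ordered pair, whereas $|I_x\cap I_y|=1$ there, which leaves slack $2$ per ordered pair, or $4$ per unordered pair. This slack exceeds the digon excess of $2$, which suggests that the bookkeeping in Step 2 should simply be done with this finer split of the $3a^2$ term.
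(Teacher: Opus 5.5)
Your Steps 1 and 2 are sound and match the paper's budget of three per vertex, six per unordered pair and five per arc. You also correctly isolate the only failing configuration: digons inside $S_j$, which are the paper's $T_j$. The gap is Step 3, which never closes.

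\textbf{The sparsity you hope for is false.} Add to the core three quotient vertices $u_1,u_2,u_3$ forming a $\dd\!_3$, with $u_k\leftrightarrow c_1$ and $u_k\to c_i$ one-way for $i\neq1$. This quotient is $\Reg$-free: every $5$-set meets $\{c_2,c_3,c_4\}$, whose out-neighbours all lie in the core, and an out-degree count then excludes a regular $5$-tournament. Blow each $u_k$ up to $s$ vertices and keep the core classes as singletons. Then $H=S_1$, $M_1\cong K_{s,s,s}$, and \eqref{eq:P-H-bound} holds with equality, with $57s^2$ on both sides.

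So the digon excess $6s^2$ cannot be paid by diagonal slack, which is only $6s$, or by one-way slack, of which there is none here. It is paid exactly by the slack $4$ of each of the $3\binom s2$ \emph{nonadjacent} pairs inside $S_1$. Such pairs exist because, contrary to your Step 2, nonadjacent vertices of $H$ need not lie in a core class: two external vertices in the same quotient class are nonadjacent.

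\textbf{Your fallback does not rescue this; it double-counts.} The net excess $2$ of such a digon was computed against the full pair budget $6$, so the slack $6-2|I_x\cap I_y|=4$ is already spent. The gross arc excess of the digon is $2(4+4)-10=6>4$. No pair-by-pair redistribution can succeed without a global arc-density bound on $S_j$.

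\textbf{The missing ingredient.} Every vertex of $S_j$ forms a digon with $c_j$. Hence $D[S_j]$ is $R_4$-free, since a copy of $R_4\cong\Reg-v$ together with $c_j$ would give $\Reg$. This is the second half of Lemma~\ref{lem:reg5-local}, not the $K_{2,3}$ statement you cite. Theorem~\ref{thm:Brown-Harary} then gives $e(S_j)\le2\ex(|S_j|,K_4)\le\frac23|S_j|^2$.

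Treating each $S_j$ as one block, $P(S_j)=|S_j|^2+8e(S_j)\le3|S_j|^2+5e(S_j)$. This is precisely the inequality a correct version of your fallback reduces to. Every pair not contained in a common $S_j$ is covered by your Step 2 bounds, and summing the budgets gives \eqref{eq:P-H-bound}. This is the paper's argument.
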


\begin{proof}
For distinct $x,y\in Y$, write
\[
  m_{xy}=\ind_{xy\in A(D)}+\ind_{yx\in A(D)}\in\{0,1,2\}.
\]
We first expand $P(X)$. Since
\[
  z_i(X)=\sum_{x\in X}\ind_{c_i\in I_x},
\]
we have
\[
  z_i(X)^2
  =\sum_{x\in X}\ind_{c_i\in I_x}
   +2\sum_{\{x,y\}\subseteq X}
      \ind_{c_i\in I_x}\ind_{c_i\in I_y}.
\]
Summing over $i$ yields
\[
  \sum_{i=1}^4z_i(X)^2
  =\sum_{x\in X}k_x
   +2\sum_{\{x,y\}\subseteq X}|I_x\cap I_y|.
\]
Also,
\[
  \sum_{x\in X}h_xd_X^+(x)
  =\sum_{xy\in A(D[X])}h_x.
\]
Therefore, for every $X\subseteq Y$,
\begin{equation}\label{eq:P-pair-expansion}
  P(X)
  =\sum_{x\in X}k_x
   +2\sum_{\{x,y\}\subseteq X}|I_x\cap I_y|
   +2\sum_{xy\in A(D[X])}h_x.
\end{equation}

The right-hand side of \eqref{eq:P-H-bound} can be written as
\[
  3a^2+5e(H)=3a+6\binom a2+5e(H).
\]
This suggests a budget of $3$ for every vertex, $6$ for every unordered pair, and an additional $5$ for every arc between the pair. Thus, for an ordinary pair $\{x,y\}$, we seek the bound
\begin{equation*}
  \text{pair contribution}\le6+5m_{xy}.
\end{equation*}
Since $k_x\le3$ on $H$, the single-vertex contribution already fits the budget of three. One exceptional kind of digon pair must be handled collectively.

For $i=1,2,3,4$, define
\begin{equation*}
  T_i=\{x\in H:h_x=4,\ I_x=\{c_i\}\},
  \qquad
  t_i=|T_i|.
\end{equation*}
These sets are pairwise disjoint. Their collective treatment is essential: if two vertices in the same $T_i$ form a digon, their pair contribution is $2+8+8=18$, exceeding the proposed budget $6+5m_{xy}=16$. We control this excess using an arc bound for the entire block $T_i$.

Every vertex of $T_i$ forms a digon with $c_i$. If $D[T_i]$ contained a copy of $R_4$, the digons between $c_i$ and the four vertices of that copy would extend it to a copy of $\Reg$, a contradiction. Thus $D[T_i]$ is $R_4$-free, and Theorem~\ref{thm:Brown-Harary} gives
\begin{equation*}
  e(T_i)\le2\ex(t_i,K_4)\le\frac23t_i^2.
\end{equation*}
For $x\in T_i$, we have $k_x=1$ and $h_x=4$, and every pair $x,y\in T_i$ satisfies $I_x=I_y=\{c_i\}$. Hence \eqref{eq:P-pair-expansion} gives
\[
  \sum_{x\in T_i}k_x
  +2\sum_{\{x,y\}\subseteq T_i}|I_x\cap I_y|
  =t_i+2\binom{t_i}{2}=t_i^2.
\]
Every internal arc of $T_i$ contributes $2h_x=8$ to the last term, and therefore
\begin{equation*}
  P(T_i)=t_i^2+8e(T_i)
  \le3t_i^2+5e(T_i),
\end{equation*}
where the last inequality follows from the preceding bound $e(T_i)\le\frac23t_i^2$.

Now consider an unordered pair $\{x,y\}\subseteq H$ that is not contained in one common $T_i$. By \eqref{eq:P-pair-expansion}, its pair contribution is
\begin{equation*}
  2|I_x\cap I_y|
  +2h_x\ind_{xy\in A(D)}
  +2h_y\ind_{yx\in A(D)}.
\end{equation*}
If $m_{xy}=0$, then this expression is $2|I_x\cap I_y|$. Since $k_x,k_y\le3$, it is at most six, as required.

If $m_{xy}=1$, the two vertices cannot both lie in the four core classes: vertices in the same core class are nonadjacent, while vertices in distinct core classes form a digon. Hence at least one of $x,y$ is external. Because it lies in $H$, its $k$-value is at most one, so $|I_x\cap I_y|\le1$. The tail of the unique arc has $h$-value at most four. Thus the total is at most
\[
  2+2\cdot4=10\le11=6+5m_{xy}.
\]

If $m_{xy}=2$, then $x\leftrightarrow y$, and the pair contribution is
\[
  2\bigl(|I_x\cap I_y|+h_x+h_y\bigr).
\]
If both vertices lie in core classes, then they lie in different classes, $|I_x\cap I_y|=2$, and $h_x=h_y=3$, so the contribution is sixteen. Otherwise at least one vertex is external, and hence $|I_x\cap I_y|\le1$. Since $h_x,h_y\le4$, the quantity inside parentheses can exceed eight only in the integer case
\[
  |I_x\cap I_y|=1,
  \qquad
  h_x=h_y=4.
\]
Neither vertex can then lie in a core class. Both are external vertices of $H$, so $k_x,k_y\le1$; the common in-neighbor forces $I_x=I_y=\{c_i\}$ for some $i$, placing both vertices in $T_i$, contrary to the current assumption. Thus the pair contribution is at most
\[
  16=6+5\cdot2=6+5m_{xy}.
\]

Finally, sum all budgets. In the preceding estimate for $P(T_i)$, the baseline term
\[
  3t_i^2=3t_i+6\binom{t_i}{2}
\]
is exactly the allocation of three to each vertex and six to each internal unordered pair. All other vertices and pairs use the same baseline. The total constant budget is 
\[
  3a+6\binom a2=3a^2,
\]
and every arc in $H$ receives coefficient five. This proves the lemma.
\end{proof}

The next lemma adds the contribution of $L$ and the cross terms between
$H$ and $L$.

\begin{lemma}\label{lem:P-Y}
With the notation above,
\begin{equation}\label{eq:P-target}
  P(Y)\le3b^2+5e_4(b)+\ell.
\end{equation}
\end{lemma}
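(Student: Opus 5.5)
The plan is to split $P(Y)$ using the pair expansion \eqref{eq:P-pair-expansion} with $X=Y=H\cup L$. It breaks into four parts: the terms internal to $H$, the single-vertex terms of $L$, the pairs inside $L$, and the cross pairs between $H$ and $L$. Lemma~\ref{lem:P-H} already bounds the $H$-internal part by $3a^2+5e(H)$.

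The $L$-vertex terms are easy. For $x\in L$, Lemma~\ref{lem:core-types} gives $h_x\le1$ and $k_x\ge3$. Since $k_x\le4$, each single-vertex term is at most $3+1$. This is exactly where the additive $\ell$ in \eqref{eq:P-target} comes from.

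For the baseline I will again allot $3$ to every vertex and $6$ to every unordered pair of $Y$, which totals $3b^2$. The remaining budget $5e_4(b)$ must then absorb $5e(H)$ plus all excess from pairs meeting $L$.

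To control $e(H)$ I use Theorem~\ref{thm:Brown-Harary}: $D[H]$ is $\Reg$-free, so $e(H)\le2\ex(a,K_5)=e_4(a)$. It then suffices to show that the total excess of the pairs meeting $L$ is at most $5\bigl(e_4(b)-e_4(a)\bigr)$, or at most $5\bigl(e_4(b)-e(H)\bigr)$ if I keep $e(H)$ explicit. For the lower bound on $e_4(b)-e_4(a)$, I would distribute the $\ell$ vertices of $L$ among the four parts of the balanced graph on $H$. This exhibits a complete bidirected $4$-partite graph on $b$ vertices with at least $e_4(a)+e_4(\ell)+\tfrac32a\ell-O(\ell)$ arcs, and $e_4(b)$ is at least that.

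Inside $L$ the excess is mild, because $h_x\le1$ makes arcs cheap. A pair in $L$ contributes at most $2|I_x\cap I_y|+2m_{xy}\le8+2m_{xy}$. The excess over $6$ is therefore at most $2+2m_{xy}$, which should fit inside the $e_4(\ell)$ slack. For nonadjacent pairs (same class), I would use that $L$-vertices in one class have identical $I_x$ and charge the excess against the missing arcs.

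The main obstacle is the cross pairs $x\in L$, $y\in H$. Such a pair contributes
\[
  2|I_x\cap I_y|+2h_y\ind_{yx\in A(D)}+2h_x\ind_{xy\in A(D)},
\]
which can reach $6+8+2=16$ for a digon. That exceeds the roughly $6+\tfrac{15}{2}$ per pair that the average slack provides. I would first try to exploit $\Reg$-freeness with the core once more.

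Take $y\in H$ with $h_y=4$, so $y$ is external with $I_y=\{c_i\}$, and take $x\in L$ with $k_x\ge3$ and $x\leftrightarrow y$. I expect that choosing two core vertices of $O_y\cap I_x$ together with $x,y$, and suitable arcs, completes a copy of $\Reg$. That would rule out the worst case, leaving only digons where $h_y=3$ (a core-class vertex, with $|I_x\cap I_y|\le3$), worth at most $6+6+2$. The resulting per-pair excess should then be at most $5m_{xy}$ on average.

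If the pointwise bound fails, the fallback is to group cross pairs by the class or type of $y$, analogously to the $T_i$ treatment in Lemma~\ref{lem:P-H}. I would then use the observation that $y$ and its digon-neighbours in $L$ lie in the digon-neighbourhood of some $c_j$, so they are $R_4$-free (cf.\ Lemma~\ref{lem:reg5-local}). This caps the number of such digons in aggregate rather than pair by pair. After that, only the remaining bookkeeping against $e_4(b)-e_4(a)$ should be needed.
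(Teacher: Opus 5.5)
Your decomposition, the single-vertex bound on $L$ (the source of the $+\ell$), and the bound $8+2m_{xy}$ for pairs inside $L$ are all fine. The cross-pair step, which you correctly flag as the crux, is not carried out, and each route you suggest for it fails.

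(i) The worst case is not $16$. If $h_y=4$ then $y$ is external, so $k_y\le1$. In general $|I_x\cap I_y|\le k_y$ and $h_x\le1$ give a cross-pair contribution of at most $2(h_y+k_y-1)+2m_{xy}\le 10+2m_{xy}\le14$.

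(ii) $\Reg$-freeness does not exclude digons between $L$ and vertices with $h_y=4$. Take the core $\dd\!_4$, a vertex $y$ with $O_y=C$ and $I_y=\varnothing$, a vertex $x$ with $I_x=C$ and $O_x=\varnothing$, and $x\leftrightarrow y$. This digraph is $\Reg$-free: a five-vertex subset either contains $x$, whose only out-neighbour is $y$, or equals $\{y\}\cup C$, where $y$ has no in-neighbour.

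(iii) ``Excess at most $5m_{xy}$ on average'' also fails. Let $I_x=C$, $O_x=\varnothing$, and let $H$ consist of core-class vertices. Then every cross pair has $m_{xy}=1$ and contribution $6+6=12$, so its excess is $6$.

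(iv) Most seriously, your main sufficient condition is false. Take $|V_1|=5$, $|V_2|=|V_3|=|V_4|=1$, and one external $x$ with $I_x=C$ and $O_x=\{c_1\}$; the quotient is $\Reg$-free since $x$ has out-degree one. Then $a=4$, $\ell=1$, and each of the four cross pairs is a digon contributing $14$. The excess of pairs meeting $L$ is therefore $32$, whereas $5\bigl(e_4(5)-e_4(4)\bigr)=30$. So no pointwise or grouped analysis of cross pairs can succeed within that budget. The lemma holds here only because $e(H)=0$ lies far below $e_4(a)$.

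The missing idea is to stop charging arcs that meet $L$ at rate $5$. The crude bounds $10+2m_{xy}$ per cross pair and $8+2m_{xy}$ per $L$-pair make those arcs cost only $2$. You then pay for them with the global Brown--Harary/Tur\'an bound $e(Y)\le e_4(b)$ on all of $Y$, which your plan never invokes. Splitting $5e(H)=2e(H)+3e(H)$ gives $P(Y)\le 2e(Y)+3e(H)+3a^2+10a\ell+4\ell^2\le 2e_4(b)+3e_4(a)+3a^2+10a\ell+4\ell^2$. The leftover constant excess $4a\ell+\ell(\ell-1)$ is absorbed by $3\bigl(e_4(b)-e_4(a)\bigr)\ge\frac92a\ell+\frac94\ell(\ell-1)$, which follows from $e_4(m+1)-e_4(m)\ge\frac32m$. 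Your parenthetical target $5\bigl(e_4(b)-e(H)\bigr)$ is the right one. It becomes usable only through $e_4(b)-e(H)\ge e(L)+e(H,L)+e(L,H)$, which is exactly this global arc bound. No further use of $\Reg$-freeness or grouping of cross pairs is needed.
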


\begin{proof}
Since $z_i(L)\le\ell$ and $h_x\le1$ for $x\in L$,
\begin{equation*}
  P(L)\le4\ell^2+2e(L).
\end{equation*}
The cross-block part of $P$ is
\begin{align*}
  R&:=P(Y)-P(H)-P(L)\\
   &=2\sum_{i=1}^4z_i(H)z_i(L)
     +2\sum_{x\in H}h_xd_L^+(x)
     +2\sum_{x\in L}h_xd_H^+(x).
\end{align*}
The three sums satisfy
\begin{align*}
  \sum_{i=1}^4z_i(H)z_i(L)
  &\le\ell\sum_{i=1}^4z_i(H)
   =\ell\sum_{x\in H}k_x,\\
  \sum_{x\in H}h_xd_L^+(x)
  &=e(H,L)+\sum_{x\in H}(h_x-1)d_L^+(x)\\
  &\le e(H,L)+\ell\sum_{x\in H}(h_x-1),\\
  \sum_{x\in L}h_xd_H^+(x)
  &\le e(L,H).
\end{align*}
Using $h_x+k_x-1\le5$ for every $x\in H$, we obtain
\begin{align*}
  R
  &\le2\bigl(e(H,L)+e(L,H)\bigr)
      +2\ell\sum_{x\in H}(h_x+k_x-1)\\
  &\le2\bigl(e(H,L)+e(L,H)\bigr)+10a\ell.
\end{align*}
Combining Lemma~\ref{lem:P-H} with the two preceding bounds gives
\begin{equation*}
  P(Y)
  \le2e(Y)+3e(H)+3a^2+10a\ell+4\ell^2.
\end{equation*}
Both $D[Y]$ and $D[H]$ are $\Reg$-free. By
Theorem~\ref{thm:Brown-Harary} and Tur\'an's theorem, every $m$-vertex
$\Reg$-free digraph $G$ satisfies
\[
  e(G)\le 2\ex(m,K_5)=m^2-p_4(m)=e_4(m).
\]
Therefore $e(Y)\le e_4(b)$ and $e(H)\le e_4(a)$, so
\begin{equation*}
  P(Y)
  \le2e_4(b)+3e_4(a)+3a^2+10a\ell+4\ell^2.
\end{equation*}

To obtain \eqref{eq:P-target} from this bound, we compare $e_4(a+\ell)$ with $e_4(a)$. Adding one vertex to a smallest part of $\dT\!_4(m)$ gives $\dT\!_4(m+1)$, and therefore
\[
  e_4(m+1)-e_4(m)
  =2\left(m-\left\lfloor\frac m4\right\rfloor\right)
  \ge\frac32m.
\]
Summing from $m=a$ to $a+\ell-1$ and multiplying by three yields
\begin{equation*}
  3\bigl(e_4(a+\ell)-e_4(a)\bigr)
  \ge\frac92a\ell+\frac94\ell(\ell-1)
  \ge4a\ell+\ell(\ell-1).
\end{equation*}
Since $b=a+\ell$, the difference between the target in \eqref{eq:P-target} and the preceding right-hand side is
\begin{align*}
  &3b^2+5e_4(b)+\ell
   -\bigl(2e_4(b)+3e_4(a)+3a^2+10a\ell+4\ell^2\bigr)\\
  &\qquad=3\bigl(e_4(a+\ell)-e_4(a)\bigr)
           -4a\ell-\ell(\ell-1)
  \ge0,
\end{align*}
where the final inequality follows from the preceding estimate. Thus \eqref{eq:P-target} holds.
\end{proof}

We can now prove the deletion estimate and identify its equality case.

\begin{proof}[\bfseries{Proof of Theorem~\ref{thm:four-vertex-deletion}}]
The core vertex $c_i$ has out-degree $3+z_i(Y)$, while a vertex $x\in Y$
has out-degree $d_Y^+(x)+h_x$ before deleting the core and $d_Y^+(x)$
afterwards. Set $\Delta_C(D)=\Phi(D)-\Phi(D-C)$. Since
$\sum_i z_i(Y)=\sum_{x\in Y}k_x$, direct expansion gives
\begin{align*}
  \Delta_C(D)
  &=\sum_{i=1}^4(3+z_i(Y))^2
    +\sum_{x\in Y}\bigl((d_Y^+(x)+h_x)^2-d_Y^+(x)^2\bigr)\\
  &=36+\sum_{x\in Y}(h_x^2+6k_x)+P(Y).
\end{align*}
For each of the $b-t$ remaining vertices in the core classes,
$h_x=k_x=3$, so $h_x^2+6k_x=27$. For every external vertex,
Lemma~\ref{lem:core-types} gives
\[
  h_x^2+6k_x
  \le
  \begin{cases}
    1+6\cdot4=25,& h_x\le1,\\
    4^2+6\cdot1=22,& k_x\le1.
  \end{cases}
\]
Thus each external vertex contributes at most $25$, and
\begin{equation}\label{eq:deletion-linear-margin}
  \Delta_C(D)\le36+27b-2t+P(Y).
\end{equation}
Combining Lemmas~\ref{lem:four-partite-increment} and~\ref{lem:P-Y} with
$\ell\le t$ and $b=n-4$, we obtain
\begin{align*}
  \Delta_C(D)
  &\le36+27b-2t+3b^2+5e_4(b)+\ell\\
  &=B_4(n)-B_4(n-4)-2t+\ell\\
  &\le B_4(n)-B_4(n-4)-t.
\end{align*}
This proves the quantitative estimate and hence
\eqref{eq:four-delete-plain}.

Suppose equality holds in \eqref{eq:four-delete-plain}. The quantitative
estimate forces $t=0$, so $D$ is a complete bidirected four-partite graph.
Write its part sizes as
\[
  r_1+1,\ldots,r_4+1,
  \qquad r_i\ge0,
  \qquad \sum_{i=1}^4r_i=b.
\]
Deleting $C$ leaves parts of sizes $r_1,\ldots,r_4$. Applying
Lemma~\ref{lem:four-partite-increment} to this graph and to the balanced
four-partite graph gives the exact gap
\[
  \bigl[B_4(n)-B_4(n-4)\bigr]-\Delta_C(D)
  =5\left(\sum_{i=1}^4r_i^2-p_4(b)\right).
\]
Since $\sum_i r_i=b$, transferring one unit from a term that exceeds
another by at least two strictly decreases $\sum_i r_i^2$. Thus
$\sum_i r_i^2\ge p_4(b)$, with equality if and only if $r_1,\ldots,r_4$
differ pairwise by at most one. Consequently, equality in
\eqref{eq:four-delete-plain} forces the four part sizes $r_i+1$ to be
balanced, so $D\cong \dT\!_4(n)$.

Conversely, deleting one vertex from each part of $\dT\!_4(n)$ produces
$\dT\!_4(n-4)$, so equality holds in \eqref{eq:four-delete-plain}.
\end{proof}

\section{Concluding remarks}

We have determined the degree-square Tur\'an number for two classes of
tournaments. For transitive tournaments, our pointwise domination
theorem extends the exact result for $\TT_4$ to all $\TT_r$ and, more
generally, reduces every nondecreasing out-degree functional to an
integer-partition problem. For the regular tournament $\Reg$ on five
vertices, we prove the conjectured exact value and show that the balanced
complete bidirected four-partite digraph is the unique extremal
construction. In particular, these results answer the problem of Ai et al. \cite{AiEtAl} for all transitive tournaments and for the regular tournament on five vertices.

It remains natural to ask whether the same phenomenon occurs for other
forbidden tournaments. That is, for which tournaments $F$ is
$\ex_2^+(n,F)$ attained uniquely by a balanced complete bidirected
multipartite digraph? A further problem is stability: if an $F$-free
digraph has degree-square sum close to the maximum, must it be close,
after changing few arcs, to the corresponding extremal construction?

\section*{Acknowledgments}
This research was supported by the National Key R\&D Program of China under grant number 2024YFA1013900, the National Natural Science Foundation of China under grant number 12471327.

\end{document}